\newtheorem{thm}{Theorem}[section]
\newcommand{\bt}{\begin{thm}}
\newcommand{\et}{\end{thm}}
\newtheorem{conj}[thm]{Conjecture}
\newtheorem{ex}[thm]{Example}
\newtheorem{cor}[thm]{Corollary}   
\newcommand{\bc}{\begin{cor}}
\newcommand{\ec}{\end{cor}}
\newtheorem{lem}[thm]{Lemma}   
\newcommand{\bl}{\begin{lem}}
\newcommand{\el}{\end{lem}}
\newtheorem{prop}[thm]{Proposition}
\newcommand{\bp}{\begin{prop}}
\newcommand{\ep}{\end{prop}}
\newtheorem{defn}[thm]{Definition}
\newcommand{\bd}{\begin{defn}}    
\newcommand{\ed}{\end{defn}}
\newtheorem{rmrk}[thm]{Remark}   
\newcommand{\br}{\begin{rmrk}}
\newcommand{\er}{\end{rmrk}}
\newcommand{\N}{\mathbb{N}}
\newcommand{\R}{\mathbb{R}}
\def\set{\textrm{set}}
\newcommand{\intcurr}{{\mathbf I}} 
\DeclareMathOperator{\Vol}{Vol}
\DeclareMathOperator{\Diam}{Diam}
\DeclareMathOperator{\Area}{Area}
\newcommand{\mass}{{\mathbf M}}
\newcommand{\diam}{\operatorname{Diam}}
\newcommand{\intcur}{{\mathbf I}} 
\newcommand{\lip}{\operatorname{Lip}}
\def\XXint#1#2#3{{\setbox0=\hbox{$#1{#2#3}{\int}$ }
\vcenter{\hbox{$#2#3$ }}\kern-.6\wd0}}
\begin{document}

\title[Null Distance and Convergence of Static Spacetimes]{Metric Convergence of Sequences of Static Spacetimes with the Null Distance}

\author{Brian Allen}
\address{CUNY Graduate Center and Lehman College}

\begin{abstract}
How should one define metric space notions of convergence for sequences of spacetimes? Since a Lorentzian manifold does not define a metric space directly, the uniform convergence, Gromov-Hausdorff (GH) convergence, and Sormani-Wenger Intrinsic Flat (SWIF) convergence does not extend automatically. 
One approach is to define a metric space structure, which is compatible with the Lorentzian structure, so that the usual notions of convergence apply. This approach was taken by C. Sormani and C. Vega \cite{SV} when defining the null distance. In this paper, we study sequences of static spacetimes equipped with the null distance under uniform, GH, and SWIF convergence, as well as H\"{o}lder bounds. We use the results of the Volume Above Distance Below (VADB) theorem of the author, R. Perales, and C. Sormani \cite{Allen-Perales-Sormani} to prove an analog of the VADB theorem for sequences of static spacetimes with the null distance. We also give a conjecture of what the VADB theorem should be in the case of sequences of globally hyperbolic spacetimes with the null distance.
\end{abstract}

\maketitle

\section{Introduction}

How should one define metric space notions of convergence for sequences of spacetimes? Since a Lorentzian manifold does not define a metric space directly, the uniform convergence, Gromov-Hausdorff (GH) convergence, and Sormani-Wenger Intrinsic Flat (SWIF) convergence does not extend automatically. 
One approach is to define a metric space structure, which is compatible with the Lorentzian structure, so that the usual notions of convergence apply. This approach was taken up by C. Sormani and C. Vega \cite{SV} by defining the null distance. Notions of spacetime intrinsic flat convergence employing the null distance have been further developed by A. Sakovich and C. Sormani \cite{SS2}, pointing out important nuances which need to be appreciated when extending metric notions of convergence to spacetimes with the null distance. The problem of defining Gromov-Hausdorff convergence for sequences of spacetimes was initially taken up by J. Noldus \cite{N}, and has been more recently addressed by E. Minguzzi and S. Suhr \cite{MS, MS-LorentzCompactness}, and by O. M\"{u}ller \cite{M}. Recently, measured Lorentzian Gromov-Hausdorff convergence has been studied by M. Braun and C. S\"{a}mann \cite{BraunSamann}. These authors take the alternative approach of redefining GH convergence with the spacetime structure in mind instead of defining a metric space structure on the spacetime. It would be great to see examples computed which specifically compare and contrast these different approaches to spacetime metric convergence. 

The first exploration of convergence of sequences of spacetimes with the null distance was initiated by the author and A. Burtscher \cite{AB} in the case of warped products whose warping functions converge uniformly. Later, the author \cite{Allen-Null} extended this work to optimal conditions on a sequence of warping functions which imply uniform and GH convergence. Given the lack of tools for estimating the SWIF distance between spacetimes with the null distance, SWIF convergence was not handled in these previous works. In this paper, we give the first tools for estimating the SWIF distance between globally hyperbolic spacetimes equipped with the null distance (See section \ref{sect-Estimating SWIF}), using ideas first introduced by the author, R. Perales, and C. Sormani \cite{Allen-Perales-Sormani} for estimating the SWIF distance between Riemannian manifolds. Furthermore, we are able to use tools developed by the author and R. Perales \cite{Allen-Perales} and the author and E. Bryden \cite{Allen-Bryden} to prove a SWIF convergence result (See Theorem \ref{thm-MainTheorem SWIF}) under optimal conditions on a sequence of static spacetimes equipped with the null distance. By taking advantage of the work of the author \cite{Allen-Holder} for proving H\"{o}lder bounds for sequences of Riemannian manifolds, we are also able to provide optimal conditions which establish H\"{o}lder bounds for a sequence of static spacetimes with the null distance which imply uniform and GH convergence of a subsequence.

\begin{thm}\label{thm-MainTheorem Holder}
Let $0<C, C'<\infty$, $M$ a compact, connected, and oriented manifold, $(M,\sigma_1)$ a continuous Riemannian manifold, $(M,\sigma_0)$ a smooth Riemannian manifold, $L=[t_0,t_1]\times M$, $h_i:M \rightarrow (0,\infty)$, $g_i=-h_i^2dt^2+\sigma_j$, $i \in \{0,1\}$. If $p > n$,
\begin{align}
\int_M\left|\frac{\sigma_1}{h_1}\right|_{\sigma_{0}}^{\frac{p}{2}}dV_{\sigma_{0}} &\le C,\label{eq-L^pBound}
\end{align}
 then  then for all $x,y \in L$ we find
\begin{align}\
    \hat{d}_{t,g_1}(x,y) \le C' \hat{d}_{t,g_{0}}(x,y)^{\frac{p-n}{p}}.
\end{align}
\end{thm}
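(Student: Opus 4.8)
The plan is to reduce the statement to the Riemannian H\"older estimate of \cite{Allen-Holder} by exploiting the rigidity of the null distance on a static spacetime. The first step is to establish a closed formula for the null distance: writing $\bar\sigma_i:=h_i^{-2}\sigma_i$, I claim that for $g_i=-h_i^2\,dt^2+\sigma_i$ on $L=[t_0,t_1]\times M$ with time function $t$,
\[
  \hat{d}_{t,g_i}\big((a,\xi),(b,\eta)\big)\;=\;\max\big\{\,|b-a|,\; d_{\bar\sigma_i}(\xi,\eta)\,\big\}.
\]
A curve $s\mapsto(\tau(s),c(s))$ is $g_i$-causal exactly when $|c'|_{\sigma_i}\le h_i|\tau'|$, i.e.\ $|c'|_{\bar\sigma_i}\le|\tau'|$, so the $\bar\sigma_i$-length of the spatial projection of any piecewise causal curve is at most its null length, which together with the trivial bound $\hat{d}_{t,g_i}\ge|b-a|$ gives ``$\ge$''. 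For ``$\le$'' one reparametrizes a $\bar\sigma_i$-minimizing spatial geodesic together with a choice of $t$-coordinate that is monotone when $d_{\bar\sigma_i}(\xi,\eta)\le|b-a|$ and finely zig-zags within $[t_0,t_1]$ otherwise. This is the static counterpart of the warped-product formulas in \cite{SV, AB, Allen-Null}, and I expect this step --- in particular making the zig-zag construction rigorous under the weak regularity allowed for $h_1$ --- to be the main obstacle; everything afterwards is bookkeeping.

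Granting the formula, the next step is to pass to the conformally rescaled spatial metrics. Since $h_0$ is smooth and positive on the compact manifold $M$, $\bar\sigma_0=h_0^{-2}\sigma_0$ is smooth and $\bar\sigma_1=h_1^{-2}\sigma_1$ is continuous. Using $|T|_{\lambda^2\sigma_0}=\lambda^{-2}|T|_{\sigma_0}$ for a symmetric $(0,2)$-tensor $T$ and $dV_{\lambda^2\sigma_0}=\lambda^{\dim M}\,dV_{\sigma_0}$, taking $\lambda=h_0^{-1}$, and absorbing the resulting fixed powers of $h_0$ (bounded above and below on $M$), the hypothesis \eqref{eq-L^pBound} transforms into a bound of the form $\int_M|\bar\sigma_1|_{\bar\sigma_0}^{p/2}\,dV_{\bar\sigma_0}\le\tilde C$, with $\tilde C$ depending only on $C$, $h_0$, $p$ and $\dim M$. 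Applying \cite{Allen-Holder} to the pair $(\bar\sigma_0,\bar\sigma_1)$ with exponent $p>\dim M$ (which is implied by $p>n$) then produces a constant $C''$ with $d_{\bar\sigma_1}(\xi,\eta)\le C''\,d_{\bar\sigma_0}(\xi,\eta)^{(p-\dim M)/p}$ for all $\xi,\eta\in M$.

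Finally I would assemble the estimate. Fix $x=(a,\xi)$ and $y=(b,\eta)$ in $L$ (the case $x=y$ being trivial) and set $D:=\hat{d}_{t,g_0}(x,y)=\max\{|b-a|,d_{\bar\sigma_0}(\xi,\eta)\}$; then $|b-a|\le D$ and $d_{\bar\sigma_0}(\xi,\eta)\le D$, so by the previous step $d_{\bar\sigma_1}(\xi,\eta)\le C''D^{(p-\dim M)/p}$, and hence, by the null-distance formula,
\[
  \hat{d}_{t,g_1}(x,y)=\max\{|b-a|,d_{\bar\sigma_1}(\xi,\eta)\}\le\max\{\,D,\;C''D^{(p-\dim M)/p}\,\}.
\]
Since $M$ is compact, $D_0:=\Diam(L,\hat{d}_{t,g_0})<\infty$, and writing $D=D^{(p-n)/p}D^{n/p}\le D_0^{n/p}D^{(p-n)/p}$ and $D^{(p-\dim M)/p}\le D_0^{(n-\dim M)/p}D^{(p-n)/p}$, one concludes $\hat{d}_{t,g_1}(x,y)\le C'\,\hat{d}_{t,g_0}(x,y)^{(p-n)/p}$ with $C':=\max\{D_0^{n/p},\,C''D_0^{(n-\dim M)/p}\}$, which depends only on the admissible data. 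This completes the proposed proof.
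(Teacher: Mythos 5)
Your proposal is correct and follows essentially the same route the paper takes: the paper reduces via conformal invariance of the null distance to metrics of the form $-dt^2+\tilde\sigma$, invokes the Riemannian H\"older theorem of \cite{Allen-Holder} (Theorem \ref{Thm-Allen-Holder}), derives the same closed formula for the static null distance (Lemma \ref{lem-Generalized Products Distance}, whose formula $d_\sigma+\max\{0,|t-s|-d_\sigma\}$ is algebraically identical to your $\max\{|t-s|,d_\sigma\}$), and then lifts the Riemannian H\"older bound to the Lorentzian one (Lemma \ref{lem-Reiammanian Holder to Lorentzian Holder}); your ``assembly'' step is exactly that lemma's case analysis in a more compact form. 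The only cosmetic differences are that you rescale both spatial metrics by $h_i^{-2}$ from the start (which incidentally clarifies the paper's notational shorthand $\sigma/h$ for $\sigma/h^2$), whereas the paper keeps the reference metric $\sigma_0$ unrescaled and absorbs the bi-Lipschitz factor from $h_0$ into the constant.
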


By applying the observation of Theorem \ref{thm-MainTheorem Holder} to a sequence of static spacetimes with additional necessary hypotheses we are able to imply uniform and GH convergence.

 \begin{thm}\label{thm-MainTheorem GH}
Let $0<A,C, C'<\infty$, $M$ a compact, connected, oriented manifold, $(M,\sigma_j)$ be a sequence of continuous Riemannian manifolds, $(M,\sigma_{\infty})$ a smooth Riemannian manifold,  $L=[t_0,t_1]\times M$, $h_j:M \rightarrow (0,\infty)$, $g_j=-h_j^2dt^2+\sigma_j$, $j \in \N\cup\{\infty\}$, with $h_j$ continuous for $j \in \N$ and smooth for $j =\infty$. If $p > n$,
\begin{align}
\int_M\left|\frac{\sigma_j}{h_j}\right|_{\sigma_{\infty}}^{\frac{p}{2}}dV_{\sigma_{\infty}} &\le C,\label{eq-L^pBound}
\\ \frac{\sigma_j(v,v)}{h_j^2} &\ge  \left( 1 -\frac{1}{j} \right)\frac{\sigma_{\infty}(v,v)}{h_{\infty}^2}, \quad \forall x \in M, v \in T_xM \label{eq-Space Component lower bound}
\\\Vol\left(M,\frac{\sigma_j}{h_j}\right)&\rightarrow \Vol\left(M,\frac{\sigma_{\infty}}{h_{\infty}}\right),\label{eq-VolumeBound}
\\ \Area\left(\partial M,\frac{\sigma_j}{h_j}\right) &\le A,\label{eq-AreaBound}
\end{align}
then  then for all $x,y \in L$ we find
\begin{align}\
    \hat{d}_{t,\tilde{g}_j}(x,y) \le C' \hat{d}_{t,g_{\infty}}(x,y)^{\frac{p-n}{p}},
\end{align}
and
\begin{align}
    (L,\hat{d}_{t,g_j})\rightarrow (L,\hat{d}_{t,g_{\infty}}),
\end{align}
in the uniform and Gromov-Hausdorff sense. 
\end{thm}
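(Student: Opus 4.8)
The plan is to reduce both conclusions to the corresponding Riemannian statements for the sequence of optical metrics on $M$, and then to carry the resulting convergence up to $L=[t_0,t_1]\times M$ using the product structure of the null distance of a static spacetime. So I would first record the structural fact that, for a static spacetime $g=-h^2\,dt^2+\sigma$ on $L$ with $h>0$ continuous on the compact $M$ (hence bounded above and below by positive constants), the null distance $\hat d_{t,g}$ with respect to the time function $t$ is governed by the Riemannian metric $\bar\sigma$ on $M$ with $|v|_{\bar\sigma}^2=\sigma(v,v)/h^2$. Indeed, a causal curve $\tau\mapsto(t(\tau),x(\tau))$ satisfies $\sigma(\dot x,\dot x)\le h^2\dot t^2$, so the $\bar\sigma$-length of its spatial projection is at most $\int|\dot t|\,d\tau$, i.e.\ at most its null length; conversely a staircase of null segments realizes any prescribed combination of a time displacement with a $\bar\sigma$-geodesic. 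As in the warped-product and static computations of \cite{SV,AB,Allen-Null} this gives
\[
\hat d_{t,g}\big((s,x),(t,y)\big)=\max\big\{\,|s-t|\,,\,d_{\bar\sigma}(x,y)\,\big\}.
\]
Writing $\bar\sigma_j$ ($j\in\N\cup\{\infty\}$) for the optical metric of $g_j$ --- that is, $\bar\sigma_j=\sigma_j/h_j^2$, the metric appearing in (\ref{eq-Space Component lower bound}), continuous for finite $j$ and smooth for $j=\infty$ --- hypotheses (\ref{eq-L^pBound})--(\ref{eq-AreaBound}) say exactly that $\{\bar\sigma_j\}$ satisfies, relative to $\bar\sigma_\infty$, an $L^p$ bound with $p>n$, the lower bound $\bar\sigma_j\ge(1-\tfrac1j)\bar\sigma_\infty$, volume convergence $\Vol(M,\bar\sigma_j)\to\Vol(M,\bar\sigma_\infty)$, and a uniform boundary-area bound $\Area(\partial M,\bar\sigma_j)\le A$.

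The Hölder conclusion is then immediate: I would apply \thmref{thm-MainTheorem Holder} with $g_1=g_j$ and $g_0=g_\infty$, noting that (\ref{eq-L^pBound}) is precisely its hypothesis and that the constant it produces depends only on $C$, $n$, $p$ and the fixed data $(\sigma_\infty,h_\infty,t_0,t_1)$ --- not on $j$ --- so a single $C'$ works along the whole sequence.

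For the convergence statement I would feed the translated hypotheses into the Riemannian machinery for $(M,\bar\sigma_j)$: these are exactly the hypotheses of the Volume-Above-Distance-Below theorem of \cite{Allen-Perales-Sormani} in the $L^p$ refinement of \cite{Allen-Holder}. Concretely, the metric lower bound forces $d_{\bar\sigma_j}\ge\sqrt{1-\tfrac1j}\,d_{\bar\sigma_\infty}$, so distances cannot collapse; the Hölder estimate just obtained makes $\{d_{\bar\sigma_j}\}$ equicontinuous on $M\times M$, hence precompact in the uniform topology by Arzel\`a--Ascoli; and volume convergence together with the boundary-area bound prevents any subsequential uniform limit from being strictly larger than $d_{\bar\sigma_\infty}$ (the mechanism of the VADB theorem). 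Thus $d_{\bar\sigma_j}\to d_{\bar\sigma_\infty}$ uniformly on $M\times M$, so $(M,\bar\sigma_j)\to(M,\bar\sigma_\infty)$ uniformly and in the Gromov--Hausdorff sense. Transferring to $L$ is soft: using $|\max\{a,b\}-\max\{a,c\}|\le|b-c|$ together with the identity above,
\[
\sup_{L\times L}\big|\hat d_{t,g_j}-\hat d_{t,g_\infty}\big|\ \le\ \sup_{M\times M}\big|d_{\bar\sigma_j}-d_{\bar\sigma_\infty}\big|\ \to\ 0,
\]
which is the asserted uniform convergence on the fixed compact set $L$; and uniform convergence of distance functions yields Gromov--Hausdorff convergence with the identity maps as $\vare_j$-approximations, $\vare_j\to0$.

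The step I expect to be the real obstacle is the assertion that the limiting distance is not \emph{strictly larger} than $d_{\bar\sigma_\infty}$. Hypothesis (\ref{eq-Space Component lower bound}) controls the metrics only from below, so a priori the degeneration $\bar\sigma_j\to\bar\sigma_\infty$ could create splines or cusps, or push volume into thin necks, inflating distances in the limit; ruling this out is precisely what the volume-convergence and boundary-area hypotheses are for, via \cite{Allen-Perales-Sormani,Allen-Holder}. So the substance of this part of the paper lies in checking that (\ref{eq-L^pBound})--(\ref{eq-AreaBound}) are packaged so as to apply those Riemannian results cleanly to the continuous metrics $\bar\sigma_j$ with smooth limit --- the remaining ingredients, the product formula for the static null distance and its transfer to $L$, being routine.
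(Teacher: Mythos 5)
Your proposal follows essentially the paper's route: conformal reduction to the optical metric $\tilde\sigma_j=\sigma_j/h_j^2$, the $L^p\Rightarrow$H\"older bound of Theorem \ref{Thm-Allen-Holder}, the lower bound \eqref{eq-Space Component lower bound}, Arzel\`a--Ascoli for subsequential uniform convergence, identification of the limit by a VADB-type pointwise-convergence result, and transfer to $L$ via the static product formula --- and your $\max$ form of that formula is equivalent to the paper's Lemma \ref{lem-Generalized Products Distance} and in fact gives a clean Lipschitz-$1$ transfer, whereas the paper's Lemma \ref{lem-Reiammanian distance to Lorentzian Distance} carries a factor of $2$. The one citation to sharpen is for the step you flag as the real obstacle: what identifies the subsequential uniform limit is the pointwise a.e.\ convergence $d_{\tilde\sigma_j}\to d_{\tilde\sigma_\infty}$ of Theorem 4.5 of \cite{Allen-Bryden} (restated here as Theorem \ref{thm-Allen-Bryden-VADB-II}, and needed because the $\sigma_j$ are only continuous), not the original VADB theorem of \cite{Allen-Perales-Sormani}, whose conclusion is intrinsic-flat convergence under smoothness.
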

\begin{rmrk}
    It is natural to want to replace conditions like \eqref{eq-L^pBound}, \eqref{eq-VolumeBound}, and \eqref{eq-AreaBound} with conditions on $h_j$ and $\sigma_j$ separately. In this remark we discuss several such sufficient conditions which will also apply to the theorem and conjecture which follows. By H\"{o}lder's inequality with $\frac{1}{q_1}+\frac{1}{q_2}=1$ we know that
    \begin{align}
       \int_M\left|\frac{\sigma_j}{h_j}\right|_{\sigma_{\infty}}^{\frac{p}{2}}dV_{\sigma_{\infty}} \le \left(\int_M\left|\sigma_j\right|_{\sigma_{\infty}}^{\frac{pq_1}{2}}dV_{\sigma_{\infty}}\right)^{\frac{1}{q_1}}\left(\int_M\left|h_j\right|_{\sigma_{\infty}}^{-\frac{pq_2}{2}}dV_{\sigma_{\infty}}\right)^{\frac{1}{q_2}},
    \end{align}
    which clears up what is sufficient for \eqref{eq-L^pBound}. Now we also observe by the determinant trace inequality that
    \begin{align}
        \Vol\left(M,\frac{\sigma_j}{h_j}\right)&\le  \int_M\left|\frac{\sigma_j}{h_j}\right|_{\sigma_{\infty}}^{\frac{n}{2}}dV_{\sigma_{\infty}},\label{eq-VolumeBoundedByL^n}
        \\\Area\left(\partial M,\frac{\sigma_j}{h_j}\right)&\le  \int_M\left|\frac{\sigma_j}{h_j}\right|_{\sigma_{\infty}}^{\frac{n-1}{2}}dV_{\sigma_{\infty}},
    \end{align}
    and so one can again use H\"{o}lder's inequality to find sufficient conditions on $h_j$ and $\sigma_j$ separately which imply \eqref{eq-AreaBound}. To imply \eqref{eq-VolumeBound}, one can use Lemma 4.3 of \cite{Allen-Sormani-2} combined with \eqref{eq-VolumeBoundedByL^n} and H\"{o}lder's inequality.
\end{rmrk}

\begin{rmrk}
Due to the fact that the null distance depends on a choice of time function, a natural question is what should be taken as a canonical choice for the time function when studying a sequence of spacetimes? One way to approach this question is to ask what properties you would like the null distance function to have and investigate what kind of time functions will yield the desired properties for the null distance. This investigation was initiated by A. Sakovich and C. Sormani \cite{SS}, and A. Burtscher and L. Garc\'{i}a-Heveling \cite{BG2} who both give independent and slighlty different characterizations of what type of time functions yield null distances which encode causality. Encoding causality is a desirable property because it means that the metric space structure is compatible with the causal structure of the spacetime. The class of time functions which (locally) encode causality is still a fairly large class so we can still ask for a good choice of a canonical time function.

     One suggestion is to use the cosmological time function, defined by L. Anderson, G. Galloway, and R. Howard \cite{AGH}, as a good choice for a canonical time function when defining the null distance on a general spacetime (See also the work of M. Hoseini, N. Ebrahimi, and M. Vatandoost \cite{MNM} for a recent study of the cosmological time function on Lorentzian length spaces). We note that the time coordinate $t$ for the static spacetimes investigated in this paper is not the cosmological time function  for these spacetimes. That being said, for the class of static spacetimes in this paper (and also for globally hyperbolic spacetimes), the $t$ coordinate is a natural candidate for a time function and allows for better estimates of the null distance. In particular, for static spacetimes as defined in this paper, the $t$ coordinate locally encodes causality by Theorem 1.1 of \cite{SS}. It is also a natural canonical choice given the way that the static spacetimes are expressed as metrics and so we are justified in choosing $t$ as the canonical time function when studying sequences of static spacetimes. 
\end{rmrk}

We now prove a similar Theorem which provides conditions which imply Sormani-Wenger intrinsic flat convergence of a sequence of static spacetimes equipped with the null distance metric space. We point out that Example \ref{Ex-BlowUpSpline} shows that the relaxed conditions of Theorem \ref{thm-MainTheorem SWIF} are not enough to imply uniform or Gromov-Hausdorff convergence and hence Sormani-Wenger Intrinsic Flat convergence is the correct conclusion for this theorem.

 \begin{thm}\label{thm-MainTheorem SWIF}
Let $0<A<\infty$, $M$ compact, connected, and oriented manifold, $(M,\sigma_j)$ be a sequence of continuous Riemannian manifolds, $(M,\sigma_{\infty})$ a smooth Riemannian manifold,  $L=[t_0,t_1]\times M$, $h_j:M \rightarrow (0,\infty)$, $g_j=-h_j^2dt^2+\sigma_j$, $j \in \N\cup\{\infty\}$, with $h_j$ continuous for $j \in \N$ and smooth for $j =\infty$. If 
\begin{align}
 \frac{\sigma_j(v,v)}{h_j^2} &\ge  \left( 1 -\frac{1}{j} \right)\frac{\sigma_{\infty}(v,v)}{h_{\infty}^2} \quad \forall x \in M, v \in T_xM, \label{eq-Space Component lower bound}
\\\Vol\left(M,\frac{\sigma_j}{h_j}\right)&\rightarrow \Vol\left(M,\frac{\sigma_{\infty}}{h_{\infty}}\right),
\\ \Area\left(\partial M,\frac{\sigma_j}{h_j}\right) &\le A,
\end{align}
 then
\begin{align}
    (L,\hat{d}_{t,g_j})\rightarrow (L,\hat{d}_{t,g_{\infty}}),
\end{align}
in the Sormani-Wenger intrinsic flat sense. 
\end{thm}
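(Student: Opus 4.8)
The plan is to reduce the statement to the Riemannian ``Volume Above Distance Below'' (VADB) theorem of \cite{Allen-Perales-Sormani}, applied to the spatial Riemannian factors of the null--distance metric spaces, and then to lift the resulting Sormani--Wenger intrinsic flat convergence through the product structure of a static spacetime equipped with the null distance, using the machinery developed in \cite{AB}, \cite{Allen-Perales}, \cite{Allen-Bryden}. First I would record the metric and current structure: for $g_j=-h_j^2\,dt^2+\sigma_j$ on $L=[t_0,t_1]\times M$ with time function $t$, a direct analysis of piecewise causal curves (as in \cite{AB}, \cite{Allen-Null}; see also \cite{SS}) gives
\[
\hat d_{t,g_j}\big((a,x),(b,y)\big)=\max\Big\{|a-b|,\ d_{\sigma_j/h_j}(x,y)\Big\},
\]
where $d_{\sigma_j/h_j}$ is the length distance of the compact Riemannian manifold $(M,\sigma_j/h_j)$ (the metric appearing in the hypotheses, whose length element is $h_j^{-1}$ times that of $\sigma_j$). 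Thus $(L,\hat d_{t,g_j})$ is isometric to the product of $[t_0,t_1]$ with $(M,\sigma_j/h_j)$ under the sup ($\ell^\infty$) product metric. Since $(M,\sigma_j/h_j)$ is a compact oriented Riemannian manifold with boundary it carries its canonical $n$-dimensional integral current $T_j$, and, as verified for null--distance spacetimes in \cite{AB}, \cite{Allen-Perales}, \cite{Allen-Bryden}, the $\ell^\infty$-product $(L,\hat d_{t,g_j})$ is an $(n+1)$-dimensional integral current space whose current is, under the obvious identification, the product of the $1$-current $[t_0,t_1]$ with $T_j$: its mass is comparable (with a constant depending only on $n$) to $(t_1-t_0)\,\mathbf{M}(T_j)$, and its boundary is assembled from the slices $\{t_0\}\times T_j$, $\{t_1\}\times T_j$ and the lateral current $[t_0,t_1]\times\partial T_j$. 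The same applies with $j=\infty$, where $(M,\sigma_\infty/h_\infty)$ is smooth.

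Next I would observe that the three hypotheses are precisely the hypotheses of the VADB theorem for the sequence $(M,\sigma_j/h_j)$ converging to $(M,\sigma_\infty/h_\infty)$: the pointwise inequality $\sigma_j/h_j\ge(1-\frac1j)\,\sigma_\infty/h_\infty$ of Riemannian metrics is the ``distance below'' estimate $d_{\sigma_j/h_j}\ge\sqrt{1-\frac1j}\,d_{\sigma_\infty/h_\infty}$; the volume convergence $\Vol(M,\sigma_j/h_j)\to\Vol(M,\sigma_\infty/h_\infty)$ is the ``volume above/convergence'' hypothesis; and the uniform bound $\Area(\partial M,\sigma_j/h_j)\le A$ is the boundary--area control that rules out cancellation along $\partial M$. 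Hence \cite{Allen-Perales-Sormani} gives $(M,\sigma_j/h_j)\to(M,\sigma_\infty/h_\infty)$ in the intrinsic flat sense; concretely, after isometric embeddings into common complete metric spaces $Z_j$, the currents $T_j$ converge to $T_\infty$ in the flat norm and $\mathbf{M}(T_j)\to\mathbf{M}(T_\infty)$.

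To lift this to the spacetimes: given $\varepsilon>0$, for large $j$ choose $Z_j$, isometric embeddings of $(M,\sigma_j/h_j)$ and $(M,\sigma_\infty/h_\infty)$ into $Z_j$, and integral currents $A_j,B_j$ in $Z_j$ with $T_j-T_\infty=A_j+\partial B_j$ and $\mathbf{M}(A_j)+\mathbf{M}(B_j)<\varepsilon$. Then $[t_0,t_1]\times_\infty Z_j$ contains both $(L,\hat d_{t,g_j})$ and $(L,\hat d_{t,g_\infty})$ isometrically, and multiplying $T_j-T_\infty=A_j+\partial B_j$ by the segment $1$-current $[t_0,t_1]$ together with $\partial([t_0,t_1]\times B_j)=(\{t_1\}-\{t_0\})\times B_j\pm[t_0,t_1]\times\partial B_j$ exhibits $\big([t_0,t_1]\times T_j\big)-\big([t_0,t_1]\times T_\infty\big)$ as $\partial\big([t_0,t_1]\times B_j\big)$ plus an excess current supported in $\{t_0,t_1\}\times B_j$ and $[t_0,t_1]\times A_j$. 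Using the bound on the mass of product currents in the $\ell^\infty$-product metric (uniform in $j$), the excess has mass $\le C\big(1+(t_1-t_0)\big)\varepsilon$ and the filling $[t_0,t_1]\times B_j$ has mass $\le C(t_1-t_0)\varepsilon$; hence the intrinsic flat distance between $(L,\hat d_{t,g_j})$ and $(L,\hat d_{t,g_\infty})$ is $\le C'\varepsilon$, and letting $j\to\infty$ proves the convergence. Equivalently, this step is an instance of the stability of intrinsic flat convergence under taking $\ell^\infty$-products with a fixed compact interval, as recorded for null--distance spacetimes in \cite{AB}, \cite{Allen-Perales}, \cite{Allen-Bryden}.

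The main obstacle is the foundational content of the first step, not the bookkeeping of the last: one must be sure that the $\ell^\infty$-product $(L,\hat d_{t,g_j})$ really is an integral current space with the expected product current. This requires that $(M,\sigma_j/h_j)$ be countably $\mathcal{H}^n$-rectifiable with positive lower density everywhere, and that the $\ell^\infty$-product metric --- which is only bi-Lipschitz, not isometric, to a Riemannian product --- nonetheless have positive and finite $(n+1)$-dimensional Hausdorff measure mutually absolutely continuous with the product of the factor measures, together with a mass bound for product currents that is uniform in $j$. These are exactly the tools imported from \cite{AB}, \cite{Allen-Perales}, \cite{Allen-Bryden}; granting them, the argument goes through with all constants depending only on $n$ and on $t_1-t_0$.
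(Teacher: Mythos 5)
Your identification of the product structure is correct: by Lemma \ref{lem-Generalized Products Distance}, $\hat d_{t,\tilde g_j}((a,x),(b,y))=\max\{|a-b|,d_{\tilde\sigma_j}(x,y)\}$ (with $\tilde\sigma_j=\sigma_j/h_j^2$ after the conformal reduction), so $(L,\hat d_{t,g_j})$ is the $\ell^\infty$-product of the interval with the spatial Riemannian factor. That observation matches what the paper uses. But your route diverges from the paper's in a way that leaves a genuine gap.

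The paper does \emph{not} pass through the full Riemannian VADB theorem. Instead it uses only the intermediate ``good set'' estimate (Theorem \ref{thm-Uniform Distance Bounds on W}, from \cite{Allen-Perales} and \cite{Allen-Bryden}): for every $\lambda,\kappa$ there is a measurable $W_{\lambda,\kappa}\subset M$ of almost full volume on which $d_{\hat\sigma_j}$ and $d_{\tilde\sigma_\infty}$ are uniformly close. It then lifts this to a distance bound on $[t_0,t_1]\times W_{\lambda,\kappa}$ (Lemma \ref{lem-Reiammanian distance to Lorentzian Distance}), and finally feeds that into a \emph{new}, spacetime-level flat-distance estimate (Definition \ref{defn-Z}, Lemma \ref{lem-Z-Estimate}, Theorem \ref{thm-Lorentzian SWIF Estimate}) proved from scratch in Section \ref{sect-Estimating SWIF}. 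Your plan instead applies the full Riemannian VADB conclusion to $(M,\tilde\sigma_j)\to(M,\tilde\sigma_\infty)$ and then invokes ``stability of intrinsic flat convergence under $\ell^\infty$-products with a fixed compact interval, as recorded for null-distance spacetimes in \cite{AB}, \cite{Allen-Perales}, \cite{Allen-Bryden}.'' No such stability result is recorded there. Those papers establish that null-distance spacetimes are integral current spaces and prove the Riemannian tools, but they do not contain a lemma that takes an arbitrary flat-norm decomposition $T_j-T_\infty=A_j+\partial B_j$ in an abstract $Z_j$ and produces the corresponding decomposition for the $(n+1)$-dimensional product currents with mass bounds uniform in $j$. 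Indeed the introduction of this very paper states that SWIF convergence ``was not handled in these previous works'' precisely because these tools were lacking, and presents Section \ref{sect-Estimating SWIF} as supplying the first such tools. Proving your lifting lemma would require essentially the same work as Section \ref{sect-Estimating SWIF}: building the gluing space, verifying that the two embeddings are distance preserving against the null distances (which is Lemma \ref{lem-Z-Estimate} and relies on the comparison of causal curve classes), and controlling the masses of the product currents in the $\ell^\infty$-product metric, where the Ambrosio-Kirchheim area factor enters. You flag this as ``the main obstacle'' in your last paragraph; the point is that the obstacle is not removed by the citations you give, so the proposal is incomplete exactly where the paper's Theorem \ref{thm-Lorentzian SWIF Estimate} does the work.

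A secondary note: your invocation of the Riemannian VADB hypothesis list omits the diameter bound that Theorem \ref{thm-Allen-Bryden-VADB-II} requires, and your plan never derives one from the stated assumptions. (This is not raised as a criticism peculiar to your write-up, but be aware it is an input you would also need to supply.)
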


\begin{rmrk}
    One should note that both Theorem \ref{thm-MainTheorem GH} and Theorem \ref{thm-MainTheorem SWIF} can be applied to non-compact manifolds with boundary where $h_j \rightarrow \infty$ as one approaches the boundary or $\sigma_j$ becomes degenerate. The theorems are applied by taking a precompact exhaustion of the non-compact manifold with boundary and considering convergence on compact subsets. So the results of this paper apply to a sequence of Schwarzschild manifolds with vanishing mass as well as similar sequences of static manifolds which one does not expect to converge smoothly on compact subsets.
\end{rmrk}

\begin{rmrk}
    If $h_j=1$ and $\sigma_j$ is given as a graph of a function $f_j:M\rightarrow (0,\infty)$ so that $\sigma_j=\sigma_{\infty}+df_j^2$ then we see that \eqref{eq-Space Component lower bound} is automatically satisfied. This shows that the stability results of L.-H. Huang, D. Lee and C. Sormani \cite{HLS}, L.-H. Huang, D. Lee and R. Perales \cite{HLP}, and A. J. Cabrera Pacheco, M. Graf and R. Perales \cite{PGPHyp}   in the time symmetric case can be extended to stability results for the associated static manifolds equipped with the null distance by Theorem \ref{thm-MainTheorem SWIF}. This is because the volume convergence and area bound hypotheses were also obtained in the previous works on the graphical case of stability of the positive mass theorem.
\end{rmrk}

Bernal and S\'{a}nchez \cite{Bernal-Sanchez} show that a globally hyperbolic spacetime $(M,g)$ can be written in the form $g=-h^2dt^2+\hat{g}(t)$ where $M=\R\times M$ for some manifold $\Sigma$, $h: \R\times M\rightarrow (0,\infty)$, and $\hat{g}(t)$ is a Riemannian metric on $M$ for each $t \in \R$. So it is natural to ask for a generalization of Theorem \ref{thm-MainTheorem GH} and Theorem \ref{thm-MainTheorem SWIF} for sequences of globally hyperbolic spacetimes. In a similar way as in the warped product and static cases, since the null distance is conformally invariant, i.e $\hat{d}_{\tau, \lambda^2 g} = \hat{d}_{\tau, g}$ where $\lambda: \R \times M \rightarrow (0,\infty)$, we can equivalently study $h^{-2}g=-dt^2+h^{-2}\hat{g}(t)$ with respect to the null distance, as we will see in the following conjecture.

\begin{conj}\label{conj-Conjecture SWIF}
Let $0<A<\infty$ and $M$ be a compact, connected, and oriented  manifold, $L=[t_0,t_1]\times M$, $h_j: [t_0,t_1]\times M \rightarrow (0,\infty)$, $g_j=-h_j^2dt^2+\sigma_j(t)$, $j \in \N\cup\{\infty\}$, $\sigma_j(t)$ is a Reimannian manifold for each $t \in [t_0,t_1]$, continuous for $j \in \N$ and smooth when $j=\infty$. If 
\begin{align}
 \frac{\sigma_j(v,v)}{h_j^2} &\ge  \left( 1 -\frac{1}{j} \right)\frac{\sigma_{\infty}(v,v)}{h_{\infty}^2} \quad \forall x \in M, v \in T_xM, \label{eq-Space Component lower bound}
\\\int_{t_0}^{t_1}\Vol\left(M,\frac{\sigma_j(t)}{h_j(t)}\right)dt&\rightarrow \int_{t_0}^{t_1}\Vol\left(M,\frac{\sigma_{\infty}(t)}{h_{\infty}(t)}\right)dt,
\\ \max_{t \in [t_0,t_1]}\Area\left(\partial M,\frac{\sigma_j(t)}{h_j(t)}\right) &\le A,
\end{align}
 then
\begin{align}
    (L,\hat{d}_{t,g_j})\rightarrow (L,\hat{d}_{t,g_{\infty}}),
\end{align}
in the Sormani-Wenger intrinsic flat sense. 
\end{conj}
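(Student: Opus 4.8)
\emph{Strategy and conformal reduction.} The plan is to extend the proof of Theorem~\ref{thm-MainTheorem SWIF} from the static to the globally hyperbolic case, treating the $t$-dependence of $\sigma_j(t)$ as the new difficulty, and to reduce once more to the Volume-Above-Distance-Below (VADB) theorem of \cite{Allen-Perales-Sormani} together with the intrinsic-flat stability tools of \cite{Allen-Perales}. First, since $\hat{d}_{t,\lambda^2 g}=\hat{d}_{t,g}$, replace $g_j=-h_j^2dt^2+\sigma_j(t)$ by the unit-lapse metric $h_j^{-2}g_j=-dt^2+\bar{\sigma}_j(t)$, where $\bar{\sigma}_j(t)=h_j^{-2}\sigma_j(t)$. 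Parametrising a future- or past-directed causal segment by $t$, it joins $(\tau_1,x_1)$ to $(\tau_2,x_2)$ exactly when the time-dependent spatial distance $\inf\{\int_{\tau_1}^{\tau_2}|\dot{x}|_{\bar{\sigma}_j(t)}\,dt:\,x(\tau_1)=x_1,\,x(\tau_2)=x_2\}$ is at most $|\tau_2-\tau_1|$, and its null length is exactly $|\tau_2-\tau_1|$; so $\hat{d}_{t,g_j}(x,y)$ is the infimum, over piecewise-causal curves confined to $L=[t_0,t_1]\times M$, of the total variation of $t$ along the curve. In the static case this is the ``sup-product'' $\hat{d}_{t,g_j}((a,p),(b,q))=\max(|a-b|,d_{\sigma_j/h_j^2}(p,q))$, whereas when $\sigma_j(t)$ varies one must also optimise over how the curve moves the $t$-coordinate through regions where $\bar{\sigma}_j(t)$ is small.

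\emph{A current-space comparison on $L$.} The technical heart is the analogue of the construction behind Theorem~\ref{thm-MainTheorem SWIF}: for each $j\in\N\cup\{\infty\}$ build an integral current space $(L,d_j,T_j)$ with $d_j$ bi-Lipschitz to $\hat{d}_{t,g_j}$ by a uniform constant (via a controlled identity map), with $\mass(T_j)$ equal, up to a fixed dimensional constant, to $\int_{t_0}^{t_1}\Vol(M,\sigma_j(t)/h_j(t))\,dt$, and with its boundary mass controlled by $\max_{t}\Area(\partial M,\sigma_j(t)/h_j(t))$ (recall $\partial L=(\{t_0,t_1\}\times M)\cup([t_0,t_1]\times\partial M)$). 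I would establish the clean static-slice estimates of the previous step over short $t$-subintervals, on which $\bar{\sigma}_j(t)$ is nearly constant, and then glue and integrate in $t$; since $[t_0,t_1]$ has fixed length the temporal contribution to both the distance and the mass stays uniformly bounded, and the same construction applies to the limit using that $g_\infty$ is smooth.

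\emph{Verifying the VADB hypotheses and concluding.} The pointwise bound \eqref{eq-Space Component lower bound}, $\bar{\sigma}_j(t)\ge(1-\frac1j)\bar{\sigma}_\infty(t)$ for every $t$, makes every piecewise-causal curve admissible for $g_j$ at least $(1-\frac1j)^{1/2}$ as long in null length as the corresponding competitor for $g_\infty$, so $\hat{d}_{t,g_j}\ge(1-\frac1j)^{1/2}\hat{d}_{t,g_\infty}$ and hence $d_j\ge(1-\frac1j)^{1/2}d_\infty$ up to the bi-Lipschitz constants: the distance-below hypothesis. The integrated volume convergence hypothesis gives $\mass(T_j)\to\mass(T_\infty)$, and the bound on $\max_t\Area(\partial M,\sigma_j(t)/h_j(t))$ together with the fixed time-interval bounds the boundary mass of $T_j$. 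The VADB theorem then yields $(L,d_j,T_j)\to(L,d_\infty,T_\infty)$ in the intrinsic flat sense, and transferring back along the uniformly bi-Lipschitz identifications, using the stability results of \cite{Allen-Perales} and the volume control to preclude loss of mass, gives $(L,\hat{d}_{t,g_j})\to(L,\hat{d}_{t,g_\infty})$ in the Sormani-Wenger intrinsic flat sense. As in the static case, a $t$-dependent modification of Example~\ref{Ex-BlowUpSpline} should show that uniform and Gromov-Hausdorff convergence genuinely fail, so the intrinsic flat conclusion is the right one.

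\emph{Main obstacle.} Everything rests on the current-space comparison. In the static case the slices of $L$ are mutually isometric and the comparison is essentially a product construction; when $\sigma_j(t)$ varies, a piecewise-causal curve may slide the time coordinate into regions where $\bar{\sigma}_j(t)$ is small, so $\hat{d}_{t,g_j}$ is a genuine optimisation over $t$-profiles and the ``effective'' spatial distance between two points of a slice can be far smaller than the intrinsic distance there. It is not clear that such a metric space admits a current-space model on $L$ that is simultaneously uniformly bi-Lipschitz to $\hat{d}_{t,g_j}$, has mass exactly $\int_{t_0}^{t_1}\Vol(M,\sigma_j(t)/h_j(t))\,dt$ up to a dimensional constant --- in particular the power of $h_j$ must come out correctly --- and satisfies the VADB hypotheses; the low ($C^0$) regularity of $\sigma_j(t)$ and $h_j$ for finite $j$, which allows the spatial metric to degenerate in the limit, is a further obstacle. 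This is why the statement is only conjectured here.
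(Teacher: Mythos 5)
This statement is a \emph{conjecture} in the paper, not a theorem: the paper deliberately offers no proof and explains that, while the intrinsic-flat estimation machinery of Section~\ref{sect-Estimating SWIF} (Definition~\ref{defn-Z}, Lemma~\ref{lem-Z-Estimate}, Theorem~\ref{thm-Lorentzian SWIF Estimate}) continues to apply, the static-case arguments of Section~\ref{sect-Proofs} do not, because the entire strategy there rests on the exact formula $\hat{d}_{t,-dt^2+\sigma}((t,x),(s,y))=d_\sigma(x,y)+\max\{0,|t-s|-d_\sigma(x,y)\}$ from Lemma~\ref{lem-Generalized Products Distance}, which converts null-distance estimates into Riemannian distance estimates and lets one invoke the VADB-type theorems. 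When $\sigma_j$ depends on $t$ that formula is lost, and the paper says explicitly that ``a new way to obtain pointwise convergence of the distance function $\hat{d}_{t,g_j}$ almost everywhere needs to be developed.'' You have correctly located this obstruction in your ``Main obstacle'' paragraph, and you are candid that it blocks your argument. So to be clear about the bottom line: you have not produced a proof, the paper does not contain one, and the reason is the same in both places.

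Two additional points are worth flagging about your middle paragraph, which reads as if the argument closes. First, the VADB machinery you cite (Theorem~\ref{thm-Allen-Bryden-VADB-II}, Theorem~\ref{thm-Uniform Distance Bounds on W}) is stated for sequences of Riemannian metrics on a fixed manifold; it does not apply directly to a conjectural current-space model of $(L,\hat{d}_{t,g_j})$, so even granting your second step, the appeal to ``the VADB theorem'' would need a separate argument. Second, the proposed transfer ``back along the uniformly bi-Lipschitz identifications'' is not legitimate: a uniformly bi-Lipschitz map does not preserve Sormani--Wenger intrinsic flat convergence unless the bi-Lipschitz constants tend to $1$, because it changes the metric (and hence the embedding metric $Z$ and the masses) by a fixed multiplicative error. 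This is exactly why the paper's proof of Theorem~\ref{thm-MainTheorem SWIF} avoids bi-Lipschitz proxies and instead works with the null distance directly via Lemmas~\ref{lem-Generalized Products Distance}--\ref{lem-Reiammanian Holder to Lorentzian Holder}, and then passes from $\hat{\sigma}_j=(1-1/j)^{-1}\tilde\sigma_j$ to $\tilde\sigma_j$ only because that bi-Lipschitz factor does tend to $1$. Your strategy sketch and the paper's discussion of the conjecture are therefore in agreement about where the difficulty lies; your paragraph three overstates what can currently be concluded.
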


One should note that the tools of section \ref{sect-Estimating SWIF} still apply to Conjecture \ref{conj-Conjecture SWIF} but the arguments used in section \ref{sect-Proofs} are not available in this case. Hence a new way to obtain pointwise convergence of the distance function $\hat{d}_{t,g_j}$ almost everywhere needs to be developed in order to take advantage of the tools of section \ref{sect-Estimating SWIF}. This will not be a straightforward adaption of the work of \cite{Allen-Perales-Sormani} or of this paper since the Lorentzian structure and the definition of null distance will have to be taken head on with a new insight in this case. That being said, the examples explored and main theorems in \cite{AB,Allen-Null}, as well as this paper, provide strong evidence for Conjecture \ref{conj-Conjecture SWIF}.

In section \ref{sect-Background}, we review the definition of the null distance, properties which have been established for this distance, the definition of SWIF convergence, as well as the work in \cite{Allen-Perales-Sormani}, \cite{Allen-Perales}, \cite{Allen-Holder}, and \cite{Allen-Bryden} which will be needed to prove the main theorems of this paper.

In section \ref{sect-Examples}, we review examples given in \cite{AB} and \cite{Allen-Null} which shed light on the necessity of the various hypotheses in the main theorems.

In section \ref{sect-Estimating SWIF}, we adapt a technique for estimating the SWIF distance, first developed for seqeunces of Riemannian manifolds in \cite{Allen-Perales-Sormani}, to the setting of globally hyerpbolic spacetimes equipped with the null distance.

In section \ref{sect-Proofs}, we combine the tools reviewed in section \ref{sect-Background} with estimation techniques of section \ref{sect-Estimating SWIF} to prove the main theorems of this paper.

\section{Background}\label{sect-Background}

A spacetime is a time oriented Lorentzian manifold $(L,g)$. We define $J^{\pm}_g(p)$ to be the set of all points which are in the causal future or past of $p$, with respect to $g$. Let $\tau:L \rightarrow \R$ be a time function which is a continuous function that is strictly increasing along all future directed causal curves. We now define the null length and null distance introduced by Sormani and Vega \cite{SV}. Let $\beta :[a,b] \rightarrow L$ be a piecewise causal curve, i.e. a piecewise smooth curve that is either future-directed or past-directed causal on its pieces $a=s_0 < s_1 < \ldots < s_k =b$ (see figure \ref{fig:AdmissableCurves}). The null length of $\beta$ is given by
 \begin{align}
  \hat L_{\tau,g} (\beta) = \sum_{i=1}^k |\tau(\beta(s_i))-\tau(\beta(s_{i-1}))|.
 \end{align}

In the case where $\tau$ is differentiable we can compute the length of any piecewise causal curve $\beta$ by 
 \begin{align}
  \hat L_{\tau,g} (\beta) = \int_a^b |(\tau \circ \beta)'(s)| ds.
 \end{align}
We will take advantage of this integral formula when estimating the SWIF distance between static spacetimes in section \ref{sect-Estimating SWIF}. For any $p,q \in M$, the \emph{null distance} is given by
 \begin{align}
  \hat d_{\tau,g} (p,q) = \inf \{ \hat L_{\tau,g} (\beta) : \beta \text{ is a piecewise causal curve from } p \text{ to } q \}.  
 \end{align}
 
It is important to note that the null distance is compatible with weak notions of spacetimes such as the Lorentzian length spaces of  M. Kunzinger and C. S\"{a}mann \cite{KCS}. For further properties of null distance see the work by the author and A. Burtscher \cite{AB}, A. Burtscher and L. Garc\'{i}a-Heveling \cite{BG,BG2}, G. Galloway \cite{GG}, M. Graff and C. Sormani \cite{GS}, M. Kunzinger and R. Steinbauer \cite{KS}, B. Meco, A. Sakovich and C. Sormani \cite{MSS}, A. Sakovich and C. Sormani \cite{SS,SS2}, C. Sormani and C. Vega \cite{SV}, and C. Vega \cite{V}.
One particularly important property which we will exploit several times in this paper is the conformal invariance of the null distance which was noticed in \cite{SV} and follows from the fact that the set of causal curves on a spacetime is a conformal invariant.

 \subsection{Pointwise Convergence on Manifolds with Boundary}

 In order to prove Theorem \ref{thm-MainTheorem GH} we will first establish pointwise convergence of the null distance functions. This will be done by first establishing pointwise convergence of the distance functions with respect to the Riemannian manifold which is part of the static spacetime structure. To this end we will apply the following theorem of the author and E. Bryden \cite{AB} which extends the work in \cite{Allen-Perales-Sormani, Allen-Perales} where similar theorems were established under less general hypotheses.

 \begin{thm}[Theorem 4.5 of \cite{Allen-Bryden}]\label{thm-Allen-Bryden-VADB-II}
  Let $(M,\partial{}M,g_{0})$ be a Riemannian manifold with boundary, and let $g_{i}$ be 
  a sequence of Riemannian metrics so that
  \begin{align}
\Diam(M,g_j) &\le D,
  \\  \Vol(M,g_j) &\rightarrow \Vol(M,g_0)
  \\ \Area(\partial M,g_j) &\le A,
  \\ g_j(v,v) &\ge ( 1 - C_j)g_0(v,v), \quad \forall p \in M, v \in T_pM, \quad C_j \searrow 0,
\end{align}
  Then $d_{g_{j}}$ converges to $d_{g_{0}}$ pointwsie almost everywhere on $M\times{}M$ with respect to
  $dV_{g_{0}}\otimes{}dV_{g_{0}}$.
\end{thm}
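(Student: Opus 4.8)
The plan is to prove the two one-sided estimates $\liminf_j d_{g_j}(p,q)\ge d_{g_0}(p,q)$, valid for every $(p,q)\in M\times M$, and $\limsup_j d_{g_j}(p,q)\le d_{g_0}(p,q)$, valid for $dV_{g_0}\otimes dV_{g_0}$-a.e.\ $(p,q)$; together these give the claimed convergence. The lower bound is immediate: from $g_j(v,v)\ge(1-C_j)g_0(v,v)$ every piecewise-smooth curve $\beta$ has $L_{g_j}(\beta)\ge\sqrt{1-C_j}\,L_{g_0}(\beta)$, so $d_{g_j}(p,q)\ge\sqrt{1-C_j}\,d_{g_0}(p,q)$ everywhere, and $C_j\searrow0$.

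For the upper bound, the first step converts the volume convergence into quantitative smallness of the largest eigenvalue of $g_j$ relative to $g_0$. Diagonalizing $g_j$ against $g_0$ pointwise with eigenvalues $\lambda_1^j\le\cdots\le\lambda_n^j$, the hypothesis gives $\lambda_i^j\ge1-C_j$, and $dV_{g_j}=f_j\,dV_{g_0}$ with $f_j=\prod_i(\lambda_i^j)^{1/2}\ge(1-C_j)^{n/2}$. Since $\int_M f_j\,dV_{g_0}=\Vol(M,g_j)\to\Vol(M,g_0)=\int_M dV_{g_0}$, the quantity $\epsilon_j:=\int_M\bigl(f_j-(1-C_j)^{n/2}\bigr)\,dV_{g_0}$ tends to $0$ and has nonnegative integrand. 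On the bad set $W_j(\delta):=\{\lambda_n^j>1+\delta\}$ one has $f_j\ge(1-C_j)^{(n-1)/2}(1+\delta)^{1/2}$, which exceeds $(1-C_j)^{n/2}$ by a definite amount for $j$ large, so $\Vol_{g_0}(W_j(\delta))\to0$; and since $\sqrt{\lambda_n^j}\le(1-C_j)^{-(n-1)/2}f_j$, also $\int_{W_j(\delta)}\sqrt{\lambda_n^j}\,dV_{g_0}\to0$. Hence $g_j\le(1+\delta)g_0$ off $W_j(\delta)$, while the weight $\sqrt{\lambda_n^j}$ that governs lengths is $L^1(dV_{g_0})$-small on $W_j(\delta)$.

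The second step builds cheap competitor curves by a sweep-out. Fix $\delta,r>0$ and a pair $(p,q)$ joined by a $g_0$-minimizing geodesic $\gamma$, which is so for a.e.\ pair. Sweep a thin tube of width $r$ about $\gamma$ by a family $\{\gamma^s\}_{s\in D^{n-1}(r)}$ of curves from $p$ to $q$ with $L_{g_0}(\gamma^s)\le d_{g_0}(p,q)+c(r)$, $c(r)\to0$, and with the sweep-out map's Jacobian controlled in terms of $(M,g_0)$ alone; near $\partial M$ one uses a half-tube, and near the pinched ends at $p,q$ a cone-shaped sweep-out. Parametrizing by $g_0$-arclength and changing variables, $\int_{D^{n-1}(r)}\bigl(\int_{\gamma^s}\mathbf{1}_{W_j(\delta)}\sqrt{\lambda_n^j}\,ds_{g_0}\bigr)\,ds\le C_r\,\int_{W_j(\delta)}\sqrt{\lambda_n^j}\,dV_{g_0}\to0$, so some $\gamma^{s_j}$ has $\int_{\gamma^{s_j}}\mathbf{1}_{W_j(\delta)}\sqrt{\lambda_n^j}\,ds_{g_0}\to0$. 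Since $g_j(\dot\gamma^{s_j},\dot\gamma^{s_j})\le\lambda_n^j$ in $g_0$-arclength, splitting the integral along $W_j(\delta)$ and using $g_j\le(1+\delta)g_0$ outside it yields $d_{g_j}(p,q)\le L_{g_j}(\gamma^{s_j})\le\sqrt{1+\delta}\,\bigl(d_{g_0}(p,q)+c(r)\bigr)+o(1)$; sending $j\to\infty$, then $r\to0$, then $\delta\to0$ gives $\limsup_j d_{g_j}(p,q)\le d_{g_0}(p,q)$.

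The hard part is making this sweep-out estimate genuinely work near the ends of the tube, where the Jacobian degenerates, and near $\partial M$. Taking the tube profile identically $1$ away from short end-intervals and replacing it by a cone-shaped sweep-out at $p$ and $q$, the end contributions are bounded, for fixed width parameters, by $dV_{g_0}$-integrals of $\sqrt{\lambda_n^j}$ over small $g_0$-balls, which vanish as $j\to\infty$ by the first step; the argument must be arranged so that this holds for the whole sequence and not merely a subsequence. The boundary area bound $\Area(\partial M,g_j)\le A$ is what lets the construction and estimate be carried out uniformly up to $\partial M$ (this is why the boundary case is treated in \cite{Allen-Bryden}, \cite{Allen-Perales}, as opposed to the earlier \cite{Allen-Perales-Sormani}), and the diameter bound $\Diam(M,g_j)\le D$ keeps the constants uniform in $j$ and ensures the limiting distance is finite.
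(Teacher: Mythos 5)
This statement is quoted from Allen--Bryden (Theorem~4.5 there) and is not proved in the present paper, so there is no ``paper's own proof'' to compare against here; what I can do is assess your sketch on its own terms, which is in the spirit of the VADB proofs you cite.

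Your lower bound and the eigenvalue bookkeeping are correct: diagonalizing $g_j$ against $g_0$, the volume-form ratio $f_j=\prod_i\sqrt{\lambda_i^j}$ satisfies $f_j\ge(1-C_j)^{n/2}$, the volume convergence forces $\int_M\bigl(f_j-(1-C_j)^{n/2}\bigr)\,dV_{g_0}\to0$ with nonnegative integrand, and on $W_j(\delta)=\{\lambda_n^j>1+\delta\}$ one has $f_j\ge(1-C_j)^{(n-1)/2}(1+\delta)^{1/2}$, whence both $\Vol_{g_0}(W_j(\delta))\to0$ and, via $\sqrt{\lambda_n^j}\le(1-C_j)^{-(n-1)/2}f_j$, also $\int_{W_j(\delta)}\sqrt{\lambda_n^j}\,dV_{g_0}\to0$.

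The gap is exactly where you flag it, and flagging it does not discharge it. The Fubini estimate you invoke for the sweep-out, namely $\int_{D^{n-1}(r)}\int_{\gamma^s}h\,ds_{g_0}\,ds\le C_r\int_M h\,dV_{g_0}$ for $h=\mathbf{1}_{W_j(\delta)}\sqrt{\lambda_n^j}$, uses the change-of-variables factor $1/J_\Phi$ for the sweep-out map $\Phi(s,t)=\gamma^s(t)$, and this factor blows up like $d(\cdot,p)^{-(n-1)}$ at the pinched ends of any cone-shaped sweep-out through $p$ and $q$. Your claim that the end contribution ``is bounded by a $dV_{g_0}$-integral of $\sqrt{\lambda_n^j}$ over a small ball'' therefore does not follow from this Fubini argument; it would require either an $L^\infty$ bound on $\lambda_n^j$ (which the hypotheses do not give) or control of $\int_B\sqrt{\lambda_n^j}\,d(\cdot,p)^{-(n-1)}\,dV_{g_0}$, which is not implied by $L^1$-smallness of $\mathbf{1}_{W_j(\delta)}\sqrt{\lambda_n^j}$. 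Trying to bypass the pinching by letting the curves emanate from nearby basepoints $p(s)$ is circular, since one would then need to control $d_{g_j}(p,p(s))$, which is exactly the type of upper bound being proved. The standard device in this line of work is to run Fubini over the pair $(p,q)$ as well as over the sweep-out parameter, so that the bad $O(t^{1-n})$ weight near the endpoints is absorbed by the $t^{n-1}$ coming from integrating the basepoint $p$ over $M$; this is what yields an ``a.e.\ $(p,q)$'' conclusion rather than an ``every $(p,q)$'' conclusion. Your sketch does not contain this double-averaging idea, so as written the upper-bound half is incomplete rather than merely unfinished.
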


\subsection{$L^p$ bounds on Riemannian Manifolds and H\"{o}lder Distance Bounds}

After we establish pointwise convergence of the sequence of null distance functions to the desired limiting null distance function, we will apply a compactness theorem to conclude uniform convergence. Since uniform convergence implies Gromov-Hausdorff convergence we will then obtain Theorem \ref{thm-MainTheorem GH}. The key ingredient in this part of the proof is a H\"{o}lder bound for Riemannian metrics, first obtained by the author in \cite{Allen-Holder}, which we review for the reader below. Since we will need this result for compact manifolds with boundary we also give a simple proof which extends the original result to manifolds with boundary.

 \begin{thm}[Theorem 1.1 of \cite{Allen-Holder}]\label{Thm-Allen-Holder}
Let $M^n$ be a compact,  connected, and oriented manifold, $M_0=(M,\sigma_0)$ a smooth Riemannian manifold, and $M_1=(M,\sigma_1)$ a continuous Riemannian manifold. If
 \begin{align}
 \exists p > n, \quad \|g_1\|_{L_{g_0}^{\frac{p}{2}}(M)} \le C
 \end{align}
  then 
\begin{align}
d_{\sigma_1}(q_1,q_2) \le C'(M,\sigma_0) d_{\sigma_0}(q_1,q_2)^{\frac{p-n}{p}},\quad \forall q_1,q_2 \in M.
\end{align}
\end{thm}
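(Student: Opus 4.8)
The plan is to run the classical proof of Morrey's Sobolev embedding $W^{1,p}\hookrightarrow C^{0,1-n/p}$ for $p>n$, with the role of $|\nabla u|$ played by the pointwise norm $|\sigma_1|_{\sigma_0}^{1/2}$ (so that $\sigma_1(v,v)\le |\sigma_1|_{\sigma_0}\,\sigma_0(v,v)$, hence $L_{\sigma_1}(\gamma)\le\int_\gamma|\sigma_1|_{\sigma_0}^{1/2}\,ds_{\sigma_0}$ for every curve $\gamma$) and the role of $|u(q_1)-u(q_2)|$ played by $d_{\sigma_1}(q_1,q_2)$. First I would dispose of the ``non-local'' case. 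Since $M$ is compact and $(M,\sigma_0)$ is a fixed smooth metric length space, there are $r_0>0$ and a finite atlas of charts — ordinary balls in the interior, and half-balls $\{x\in\R^n:|x|<1,\ x_n\ge 0\}$ near $\partial M$ — such that whenever $d_{\sigma_0}(q_1,q_2)<r_0$ the two points lie in a common chart, inside which $\sigma_0$ is two-sided comparable to the Euclidean metric $\delta$, $dV_{\sigma_0}$ to Lebesgue measure, $d_{\sigma_0}$ to the Euclidean distance, and $|\sigma_1|_{\sigma_0}$ to $|\sigma_1|_{\delta}$, all with constants from compactness of $(M,\sigma_0)$. If instead $d_{\sigma_0}(q_1,q_2)\ge r_0$, the inequality is immediate by enlarging $C'$, since $\Diam(M,\sigma_1)<\infty$ (as $\sigma_1$ is continuous on the compact $M$). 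So it suffices to treat $q_1,q_2$ in one chart with $r:=d_{\sigma_0}(q_1,q_2)$ small.

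Working in Euclidean coordinates on that chart, let $B$ be the intersection of the chart with a Euclidean ball of radius of order $r$ around $q_1$; then $B$ is convex, contains $q_1$ and $q_2$, and contains every segment $[q_1,z]$ and $[z,q_2]$ with $z\in B$. For each $z\in B$ the concatenation $\gamma_z=[q_1,z]\cup[z,q_2]$ is a path from $q_1$ to $q_2$ lying in $M$, so
\[
d_{\sigma_1}(q_1,q_2)\ \le\ L_{\sigma_1}(\gamma_z)\ \le\ \int_{[q_1,z]}|\sigma_1|_{\sigma_0}^{1/2}\,ds\ +\ \int_{[z,q_2]}|\sigma_1|_{\sigma_0}^{1/2}\,ds,
\]
where from now on $ds$ and the norms are Euclidean, up to the fixed comparison constant. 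Averaging over $z\in B$ and writing each fan of segments in polar coordinates centred at $q_1$ (respectively $q_2$), where the Jacobian is \emph{exactly} $\rho^{n-1}$, Fubini converts $\tfrac{1}{|B|}\int_B\int_{[q_1,z]}|\sigma_1|_{\sigma_0}^{1/2}\,ds\,dz$ into a constant times $\int_{B'}|\sigma_1|_{\sigma_0}^{1/2}(x)\,|x-q_1|^{-(n-1)}\,dx$ for $B'$ a ball of radius of order $r$: the factor $r^{-n}$ from the average and the factor of order $r^{n}$ coming from $\int_\tau^{\cdot}\rho^{n-1}\,d\rho$ cancel, and likewise for the $q_2$-term. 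It then remains to bound $\int_{B'}|\sigma_1|_{\sigma_0}^{1/2}(x)\,|x-q_1|^{-(n-1)}\,dx$.

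Here I would apply Hölder's inequality with exponents $p$ and $p'=p/(p-1)$. The first factor, $\big(\int_{B'}|\sigma_1|_{\sigma_0}^{p/2}\,dx\big)^{1/p}$, is controlled by the hypothesis $\|\sigma_1\|_{L^{p/2}_{\sigma_0}(M)}\le C$. The second factor is $\big(\int_{B'}|x-q_1|^{-(n-1)p'}\,dx\big)^{1/p'}$; since $B'$ has Euclidean radius of order $r$, this equals, up to a constant, $r^{\,n/p'-(n-1)}$ provided the model radial integral $\int_0^1\rho^{(n-1)(1-p')}\,d\rho$ converges, which happens precisely when $(n-1)(p'-1)=(n-1)/(p-1)<1$, i.e.\ $p>n$ — exactly the hypothesis. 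Using $n/p'-(n-1)=(p-n)/p$, the combination yields $d_{\sigma_1}(q_1,q_2)\le C'\,r^{(p-n)/p}=C'(n,p,C,M,\sigma_0)\,d_{\sigma_0}(q_1,q_2)^{(p-n)/p}$, as desired.

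The step I expect to be the real work — and the reason the statement is worth reproving ``with boundary'' — is the geometric reduction: one must choose the atlas so that (i) the averaging paths $\gamma_z$ genuinely stay inside $M$, which is why convex half-ball charts are used; (ii) $d_{\sigma_0}$ is two-sided comparable to the chart Euclidean distance even for points on $\partial M$, which follows because a genuinely short $\sigma_0$-path cannot escape a fixed chart; and (iii) the volume lower bound $|B|\gtrsim r^n$ survives at the boundary, which holds since a half-ball still carries a definite fraction of the full ball's measure. Passing to Euclidean coordinates is what makes the Jacobian bookkeeping exact and removes the need for any comparison-geometry estimates; after that, the argument is just the standard Morrey estimate together with compactness of $(M,\sigma_0)$.
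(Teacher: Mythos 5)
Your argument is correct, but it takes a genuinely different route from the paper. The paper treats the closed-manifold case of Theorem 1.1 of \cite{Allen-Holder} as a black box and reduces the boundary case to it: it passes to the double $N$ of $M$, observes that the reflected $\sigma_0$ is then merely continuous (since $\partial M$ need not be totally geodesic), smooths it to a nearby smooth metric $\hat\sigma_0$ via short-time Ricci flow from $C^0$ initial data (Burkhardt--Guim), applies the closed result to $(N,\hat\sigma_0)$ and $(N,\sigma_1)$, and then transfers the H\"older bound from $d_{\hat\sigma_0}$ back to $d_{\sigma_0}$ using their $C^0$-closeness. You instead reprove the result from scratch by the Morrey averaging argument — bounding $d_{\sigma_1}(q_1,q_2)$ by the $\sigma_1$-length of broken segments $[q_1,z]\cup[z,q_2]$, averaging over $z$ in a chart ball, converting to singular integrals in polar coordinates, and applying H\"older with $p>n$ for integrability — and you handle the boundary geometrically by working in convex half-ball charts so that the averaging segments stay inside $M$ and the ball-volume lower bound $|B|\gtrsim r^n$ survives. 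Both arguments are sound; yours buys self-containedness and explicit constants $C'(n,p,C,M,\sigma_0)$ at the cost of reproving the Morrey estimate, while the paper's is shorter but imports the closed-manifold result plus the nontrivial Ricci-flow smoothing theorem. One small point worth flagging in the paper's direction that your proof sidesteps entirely: the intermediate bound there labelled \eqref{eq-Intermediate Holder Bound} really should come from bi-Lipschitz equivalence $d_{\hat\sigma_0}\lesssim d_{\sigma_0}$ (and then raise to the power $\tfrac{p-n}{p}$ using boundedness of the diameter), whereas your chart-by-chart argument never needs to compare two metrics on the double.
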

\begin{proof}
    If $M$ is closed then we are done. If $M$ has boundary, then let $N$ be the double of $N$ which induces a natural continuous Riemannian metrics $N_0=(N,\sigma_0)$, and $N_1=(N,\sigma_1)$. Since we need $N_0$ to be smooth, we can run Ricci flow for a short period of time by Theorem 1.1 of P. Burkhardt-Guim \cite{PBG} to obtain a smooth Riemannian metric $\hat{\sigma}_0$ which is $C^0$ close to $\sigma_0$. Then since the corresponding distance functions will satisfy a H\"{o}lder bound
   \begin{align}\label{eq-Intermediate Holder Bound}
d_{\hat{\sigma}_0}(q_1,q_2) \le\bar{C}(M,\sigma_0) d_{\sigma_0}(q_1,q_2)^{\frac{p-n}{p}},\quad \forall q_1,q_2 \in M,
\end{align}
we may apply the result of \cite{Allen-Holder} to $N_1$ and $(N,\hat{\sigma}_0)$ and combine with \eqref{eq-Intermediate Holder Bound} to obtain the desired result on the original manifolds with boundary.
\end{proof}

\subsection{Uniform Distance Bounds on a Good Set}\label{sect-Pointwise to Unfirom on a Good Set}

In this section we remind the reader how to obtain distance controls on a closed and measurable good set $W$ of almost full volume. This was first established by the author, R. Perales, and C. Sormani \cite{Allen-Perales-Sormani} and was used to establish the Volume Above Distance Below theorem, which is the inspiration for the main theorems of this paper as well as Conjecture \ref{conj-Conjecture SWIF}. By combining Lemma 4.3 of \cite{Allen-Perales} with Theorem \ref{thm-Allen-Bryden-VADB-II} of \cite{Allen-Bryden} one obtains the following theorem which will be used to prove Theorem \ref{thm-MainTheorem SWIF}.

\begin{thm}\label{thm-Uniform Distance Bounds on W}
Let $M$ be a compact, connected, and oriented manifold, $(M,\sigma_j)$ be a sequence of continuous Riemannian manifolds and $(M,\sigma_0)$ a smooth Riemannian manifold such that
\begin{align} 
\sigma_0(v,v) \le \sigma_j(v,v), \quad \forall p \in M, v \in T_pM,
\end{align}
\begin{align}
\diam(M,\sigma_j) \le D_0,
\end{align}
and 
\begin{align} 
\Vol_j(M_j) \rightarrow \Vol_0(M_0).
\end{align}  
Then for any $\lambda  \in (0, \diam(M_0))$ and  $\kappa >1$, there exists 
 a measurable set $W_{\lambda,\kappa} \subset M$  and a constant $\delta_{\lambda, \kappa, j}>0$ such that for all $p_1, p_2 \in W_{\lambda, \kappa}$
\begin{align}
|d_{\sigma_j}(p_1,p_2)-d_{\sigma_0}(p_1,p_2)| < 2 \lambda + 2\delta_{\lambda,\kappa,j},
\end{align}
where $\delta_{\lambda,\kappa,j} \to 0$ as $j \to \infty$,
and
\begin{align}
\Vol(M \setminus W_{\lambda, \kappa},\sigma_j) \le  \frac{1}{\kappa}\Vol(M,\sigma_0)+|\Vol(M,\sigma_j)-\Vol(M,\sigma_0)|.
\end{align}
\end{thm}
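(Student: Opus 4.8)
The plan is to obtain the estimate from the pointwise almost-everywhere convergence of the distance functions and then localize it on a good set by an Egorov-type argument -- the latter being essentially Lemma~4.3 of \cite{Allen-Perales}. First I would note that the hypothesis $\sigma_0(v,v)\le\sigma_j(v,v)$ gives $d_{\sigma_0}\le d_{\sigma_j}$ everywhere on $M\times M$, so the lower half of the asserted inequality is automatic and only the bound $d_{\sigma_j}(p_1,p_2)\le d_{\sigma_0}(p_1,p_2)+2\lambda+2\delta_{\lambda,\kappa,j}$ needs to be proved; the same pointwise inequality also yields $dV_{\sigma_0}\le dV_{\sigma_j}$, which I will use for the volume bound on the complement. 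The first real step is to apply Theorem~\ref{thm-Allen-Bryden-VADB-II} with $g_0=\sigma_0$, $g_j=\sigma_j$ and $C_j=0$: its remaining hypotheses are exactly $\diam(M,\sigma_j)\le D_0$ and $\Vol(M,\sigma_j)\to\Vol(M,\sigma_0)$, so we get that $d_{\sigma_j}\to d_{\sigma_0}$ pointwise $dV_{\sigma_0}\otimes dV_{\sigma_0}$-almost everywhere on $M\times M$.

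By Fubini, the set of $q\in M$ for which $d_{\sigma_j}(\cdot,q)\to d_{\sigma_0}(\cdot,q)$ holds $dV_{\sigma_0}$-a.e.\ has full volume, hence is dense, and I would fix once and for all a finite $\lambda$-net $\{q_1,\dots,q_N\}$ of $(M,\sigma_0)$ consisting of such points. Applying Egorov's theorem to the finite family $p\mapsto d_{\sigma_j}(p,q_k)$, $1\le k\le N$, and using inner regularity of $dV_{\sigma_0}$, I would extract a closed set $W_{\lambda,\kappa}\subset M$, \emph{independent of $j$}, with $\Vol(M\setminus W_{\lambda,\kappa},\sigma_0)\le\tfrac{1}{\kappa}\Vol(M,\sigma_0)$ on which $d_{\sigma_j}(\cdot,q_k)\to d_{\sigma_0}(\cdot,q_k)$ uniformly for every $k$. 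I would then set $\delta_{\lambda,\kappa,j}:=\max_{k}\sup_{p\in W_{\lambda,\kappa}}|d_{\sigma_j}(p,q_k)-d_{\sigma_0}(p,q_k)|$, which is positive and tends to $0$ as $j\to\infty$.

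With this set in hand the distance estimate is a two-step triangle inequality. Given $p_1,p_2\in W_{\lambda,\kappa}$, choose net points $q_{k_1},q_{k_2}$ with $d_{\sigma_0}(p_i,q_{k_i})<\lambda$; then, using the triangle inequality for $d_{\sigma_j}$ and the defining bound for $\delta_{\lambda,\kappa,j}$ (legitimate since $p_1,p_2\in W_{\lambda,\kappa}$),
\[
d_{\sigma_j}(p_1,p_2)\le d_{\sigma_j}(p_1,q_{k_1})+d_{\sigma_j}(q_{k_1},p_2)\le d_{\sigma_0}(p_1,q_{k_1})+d_{\sigma_0}(q_{k_1},p_2)+2\delta_{\lambda,\kappa,j},
\]
and expanding $d_{\sigma_0}(q_{k_1},p_2)\le d_{\sigma_0}(q_{k_1},p_1)+d_{\sigma_0}(p_1,p_2)<\lambda+d_{\sigma_0}(p_1,p_2)$ produces the bound $d_{\sigma_0}(p_1,p_2)+2\lambda+2\delta_{\lambda,\kappa,j}$, as required. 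For the volume of the complement I would use $\sigma_0\le\sigma_j$ once more, so that $\Vol(W_{\lambda,\kappa},\sigma_j)\ge\Vol(W_{\lambda,\kappa},\sigma_0)$ and hence $\Vol(M\setminus W_{\lambda,\kappa},\sigma_j)=\Vol(M,\sigma_j)-\Vol(W_{\lambda,\kappa},\sigma_j)\le\Vol(M\setminus W_{\lambda,\kappa},\sigma_0)+\bigl(\Vol(M,\sigma_j)-\Vol(M,\sigma_0)\bigr)$, the two terms being bounded by $\tfrac{1}{\kappa}\Vol(M,\sigma_0)$ and $|\Vol(M,\sigma_j)-\Vol(M,\sigma_0)|$ respectively.

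There is no deep obstacle: the content is a repackaging of Theorem~\ref{thm-Allen-Bryden-VADB-II} with the good-set lemma of \cite{Allen-Perales}. The one point that must be handled carefully is the construction of $W_{\lambda,\kappa}$ -- it has to be a single set not depending on $j$ (so that the conclusion is a statement about fixed points $p_1,p_2$) while still forcing $\delta_{\lambda,\kappa,j}\to0$, which is precisely why one passes through a finite net and a single Egorov extraction rather than attempting a slice-by-slice argument on $M\times M$ -- together with the bookkeeping that converts the $\sigma_0$-volume control on $M\setminus W_{\lambda,\kappa}$ into the $\sigma_j$-volume control via $\sigma_0\le\sigma_j$ and the volume convergence.
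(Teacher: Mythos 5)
Your proof is correct and takes essentially the same route the paper indicates: invoke Theorem~\ref{thm-Allen-Bryden-VADB-II} (with $C_j=0$) for pointwise a.e.\ convergence of distances, then extract the good set $W_{\lambda,\kappa}$ via a finite net and Egorov's theorem, which is precisely the content the paper attributes to Lemma~4.3 of \cite{Allen-Perales}, and finally transfer the $\sigma_0$-volume bound on the complement to a $\sigma_j$-volume bound using $\sigma_0\le\sigma_j$ and volume convergence. The paper gives no written proof beyond citing these two ingredients, and your reconstruction of the net-plus-Egorov step and the triangle-inequality bookkeeping matches the intended combination.
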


\subsection{Uniform Convergence of Metric Spaces}
Given two metric spaces $(X,d_1)$ and $(X,d_2)$, defined on the same set $X$, we can define the uniform distance between them to be
\begin{align}\label{def-UniformDist}
    d_{unif}(d_0,d_1)=\sup_{x_1,x_2\in X}|d_1(x_1,x_2)-d_2(x_1,x_2)|
\end{align}
Given a sequence of metric spaces $(X,d_j)$ we can define the uniform convergence of $d_j$ to a limiting metric space $(X,d_{\infty})$ by
\begin{align}\label{def-UniformConvergence}
    d_{unif}(d_j,d_{\infty}) \rightarrow 0.
\end{align}

\subsection{Sormani-Wenger Intrinsic Flat Convergence}

The author and A. Burtscher \cite{AB} showed that warped product spacetimes, and more generally globally hyperbolic spacetimes, are integral current spaces (see the definition below) using bi-Lipschitz estimates. Since these are the type of metric spaces which Sormani-Wenger Intrinsic Flat (SWIF) distance is defined on, we are justified in applying the SWIF notion of distance to static spacetimes equipped with the null distance. In this subsection we will remind the reader of some important definitions related to the SWIF distance which will be crucial for estimates we make in section \ref{sect-Estimating SWIF}.

We start by reviewing the definition of the flat distance of H. Federer and W. H. Fleming \cite{FF} which was extended to arbitrary metric spaces by L. Ambrosio and B. Kircheim \cite{AK}. Let $(Z,d)$ be a complete metric space, $\lip(Z)$ the set of  real valued Lipschitz functions on $Z$, and 
$\lip_b(Z)$ the bounded ones.  An $n$-dimensional current $T$ on $Z$ 
is a multilinear map $T: \lip_b(Z) \times [\lip(Z)]^n  \to \R$  that satisfies properties which can be found in Definition 3.1 of  \cite{AK}.  
From the definition of $T$ we know there exists a finite Borel measure on $Z$,  $\|T\|$,  called the mass measure of $T$.
Then the mass of $T$ is defined as $\mass(T)=\|T\|(Z)$.  The  boundary of $T$,  $\partial T: \lip_b(Z) \times [\lip(Z)]^{n-1}  \to \R$  
is the linear functional given by
\begin{align}
\partial T(f, \pi) = T(1, (f, \pi)),
\end{align}
and for any Lipschitz function $\varphi: Z \to Y$  the push forward of $T$,   ${\varphi}_{\sharp} T : \lip_b(Y) \times [\lip(Y)]^{m}  \to \R$
is the $n$-dimensional current given by 
\begin{align}
{\varphi}_{\sharp} T (f, \pi) 
= T( f\circ \varphi, \pi \circ \varphi ).
\end{align}
Furthermore,  the following inequality holds
\begin{equation}\label{eq-pushMeasure}
\|  \varphi_\sharp T\|  \leq \lip(\varphi)^n \varphi_\sharp \|T\|. 
\end{equation} 

More generally, an $n$ dimensional integer rectifiable current, $T$, can be parametrized by
a countable collection of biLipschitz charts, $\varphi_i: A_i \to \varphi_i(A_i)\subset Z$ where 
$A_i$ are Borel in ${\mathbb R}^n$ with pairwise disjoint images and integer weights $\theta_i\in {\mathbb Z}$ such that
\begin{align}
T(f, \pi_1,...\pi_n) = \sum_{i=1}^\infty \theta_i \int_{A_i} (f\circ \varphi_i)\, d(\pi_1 \circ \varphi_i)\wedge \cdots \wedge d(\pi_n \circ \varphi_i)
\end{align}
has finite mass, $\mass(T)=\|T\|(Z)$.
 
An $n$-dimensional integral current in $Z$ is an $n$-dimensional current that can be written as a countable sum of terms, 
\begin{align}
    T=  \sum_{i=1}^\infty \varphi_{i\sharp} [[\theta_i]],
\end{align} 
with $\theta_i \in L^1(A_i, \mathbb R)$ integer constant functions, 
such that $\partial T$ is a current.  The class that contains all $n$-dimensional integral currents of $Z$ is denoted by $\intcurr_n(Z)$. 
For  $T\in I_n(Z)$,  L. Ambrosio and B. Kirchheim proved that the subset 
\begin{align}
\set(T)= \left\{ z \in Z \, | \, \liminf_{r \downarrow 0} \frac{\|T\|(B_r(z))}{ r^n }> 0 \right\}
\end{align}   
is $\mathcal H^n$-countably recitifiable. That is, $\set(T)$ can be covered by images of Lipschitz maps from $\R^n$  to $Z$
up to a set of zero  $\mathcal H^n$-measure.

Since all the examples of this paper will be static spacestimes, or more generally globally hyerbolic spacetimes, we now look at the example of globally hyerbolic spacetimes as integral current spaces as observed in \cite{AB}.

\begin{ex}\label{ex-Globally Hyperbolic Integral Current Space}
For an  $n+1$-dimensional compact globally hyperbolic spacetime $(L^{n+1},\bar{g}=-h^2dt^2+g)$,
the triple $(L,\hat{d}_{t,\bar{g}}, [[L]])$ given as follows is an $n+1$-dimensional integral current space.
Here $[[L]] :  \lip_b(L) \times [\lip(L)]^{n+1} \to \R$ is given by 
\begin{align}\label{eq-canonicalT}
[[L]] = &  \sum_{i,k} {\psi_i}_\sharp [[1_{A_{ik}}]] 
\end{align}
where we have chosen the smooth locally finite atlas $\{(U_i, \psi_i)\}_{i \in \mathbb N}$ of $L$ given in \cite{AB} consisting of positively oriented Lipschitz charts, 
$\psi_i :  U_i  \subset \R^n   \to L$  and $A_{ik}$  are precompact Borel sets such that $\psi_i(A_{ik})$ have disjoint images for all $i$ and $k$, and cover $L$ $\mathcal H^n$-almost everywhere. In this case, $\|[[L]]\|= dV_{\hat{g}}$ where $\hat{g}=h^2dt^2+g$. 
\end{ex}

In \cite{AK}, Ambrosio-Kirchheim prove that for rectifiable currents
\begin{align}
\|T\|=  \lambda \theta \mathcal{H}^n
\end{align}
where $\theta$ is an integer valued function and the area factor
$\lambda:  \set(T)  \to \R$ is a measurable function bounded above by 
\begin{align} \label{C_n}
C_n=2^n/\omega_n \textrm{ where } \omega_n=\Vol_{{\mathbb E}^n}(B_0(1)).
\end{align}
So that
\begin{align}
\mass(T) \le C_n  \sum_{i=1}^\infty |\theta_i| \mathcal{H}^n( \varphi_i(A_i)) < \infty.
\end{align}

We will make frequent use of this in section \ref{sect-Estimating SWIF} in order to trade mass estimates of integral current spaces for Hausdorff measure estimates of metric spaces. In particular, we will use $\mathcal{H}^n_d$ to denote the Hausdorff measure with respect to the metric $d$ and we will use the estimate, which follows from the observations above, that
\begin{align}
    \|T\|_d(Z) \le C_n \mathcal{H}^n_d(Z),
\end{align}
where $(Z,d,T)$ has multiplicity $1$.

The flat distance between two integral currents $T_1, T_2  \in  \intcurr_{n} (Z)$ is defined as
\begin{align}
\begin{split}
d_{F}^Z( T_1, T_2)=\inf\Bigl\{  \mass(U)+ \mass(V)\,  |&
\, \, U \in \intcurr_{n}(Z), \, V   \in  \intcurr_{n+1} (Z),
\\
 & \, \, T_2 -T_1 =U + \partial V  \Bigr\}.    
\end{split}
\end{align}

With the definition of flat convergence on a general metric space in hand we are ready to define integral current spaces which are the spaces for which Sormani-Wenger intrinsic flat distance is defined. One should see C. Sormani and S. Wenger \cite{Sormani-Wenger} for more details. An $n$-dimensional integral current space $(X, d, T)$ consists of a metric space $(X, d)$ and an $n$-dimensional integral current defined on the completion of $X$, $T\in I_n(\bar{X})$, such that $\set(T)=X$.

We say that an integral current space $(X,d,T)$ is precompact if $X$ is precompact with respect to $d$.
Given two $n$-dimensional integral current spaces, $(X_1, d_1, T_1)$ and $(X_2, d_2, T_2)$, a current preserving isometry between them is a 
 metric isometry $\varphi: X_1 \to X_2$  such that $\varphi_\sharp T_1=T_2$.   We are now ready to state the definition of the SWIF distance between integral current spaces.

\begin{defn}[Sormani-Wenger \cite{Sormani-Wenger}]\label{defn-SWIF} 
Given two $n$-dimensional precompact integral current spaces 
$(X_1, d_1, T_1)$ and $(X_2, d_2,T_2)$, the Sormani-Wenger Intrinsic Flat distance   
between them is defined as
\begin{align}
\begin{split}
d_{\mathcal{F}}&\left( (X_1, d_1, T_1), (X_2, d_2, T_2)\right)
\\&=\inf  \Bigl\{d_F^Z(\varphi_{1\sharp}T_1, \varphi_{2\sharp}T_2)|  \,
 (Z,d_Z) \text{ complete} ,\, \varphi_j: X_j \to Z   \text{ isometries}\Bigr\}.      
\end{split}
\end{align}
\end{defn}

The function $d_{\mathcal{F}}$ is a distance up to current preserving isometries. We say that a sequence $(X_j,d_j,T_j)$ of $n$-dimensional integral current spaces converges in the volume preserving Sormani-Wenger Intrinsic Flat sense, $\mathcal{VF}$, to $(X,d,T)$ if the sequence converges with respect to the intrinsic flat distance
to $(X,d,T)$ and the masses $\mass(T_j)$ converge to $\mass(T)$.  

\section{Examples}\label{sect-Examples}

In this section we review examples which were explored in \cite{AB, Allen-Null} which demonstrate the importance of the hypotheses in Theorem \ref{thm-MainTheorem GH} and Theorem \ref{thm-MainTheorem SWIF} and build intuition for uniform convergence of the null distance function in the static case. Throughout this section $(\mathbb{D}^n,\sigma)$ stands for the closed flat unit disk which is chosen for notational convenience. It should be noted that the conclusions of the examples does not rely heavily on this choice and similar examples will hold for a compact, connected Riemannian manifold $(\Sigma^n,\sigma)$ replacing $(\mathbb{D}^n, \sigma)$.

\subsection{No uniform  control from below in the space direction}

The first example demonstrates the necessity of the assumption on $\sigma_j \ge \sigma_{\infty}$ in the main theorems. We will see that without this assumption one can have a sequence of static metrics which converge to a metric space which is not a spacetime.

\begin{ex}\label{Ex-No Control From Below in Space}
      Let $(\mathbb{D}^n,\sigma)$, $n \ge 2$ be a flat disk, $f_j:\mathbb{D}^n \rightarrow (0,\infty)$, $j \ge 2$ a sequence of continuous functions defined radially on $\mathbb{D}^n$ where $s$ is the distance in $(\mathbb{D}^n,\sigma)$ to the boundary by
      \begin{align}f_j(s)=
          \begin{cases}
          \frac{1}{j} & s\in  \left[0,\frac{1}{j}\right]
          \\k_j(s)& s \in   \left[\frac{1}{j},\frac{3}{2j}\right]
\\1& \text{ otherwise }
          \end{cases}
      \end{align}
      where $k_j$ is any increasing continuous function so that $k_j(\frac{1}{j})=\frac{1}{j}$, and $k_j(\frac{3}{2j})=1$. If $g_j=-dt^2+f_j^2\sigma$ and $g_0=-dt^2+\sigma$ then we find $\hat{d}_{t,g_j} \not\rightarrow \hat{d}_{t,g_0}$. Furthermore, we let $L=([0,1]\times \mathbb{D}^n,\hat{d}_{t,g_0})$ and let
      \begin{align}
         F: [0,1]\times \partial \mathbb{D}^n \subset L \rightarrow  [0,1]
      \end{align} 
      be the map defined by projection onto to the time factor of $L$. Then if $(P,d_0)=(L, \hat{d}_{t,g_0}/\sim)$ where we identify points by the map $F$ we can conclude that   
      \begin{align}
          ([0,1]\times \mathbb{D}^n,\hat{d}_{t,g_j}) \rightarrow (P,d_0),
      \end{align}
      in the Gromov-Hausdorff sense.
  \end{ex}

\subsection{No uniform control from above in the time direction}

The next example demonstrates the necessity of assuming $h_{\infty} \ge h_j$ since we will see that without this assumption we can have a sequences of static spacetimes converge to a metric space which is not a spacetime. One should note that the next example is equivalent to the previous example as far as null distance is concerned since the null distance is conformally invariant. When we combine these two examples we see the necessity of assuming $\frac{\sigma_j}{h_j} \ge \frac{\sigma_{\infty}}{h_{\infty}}$ in the main theorems.

\begin{ex}\label{Ex-No Control From Above In Time}
      Let $(\mathbb{D}^n,\sigma)$, $n \ge 2$ be a flat disk, $h_j:[0,1]\rightarrow (0,\infty)$, $j \ge 2$ a sequence of continuous functions defined radially on $\mathbb{D}^n$ where $s$ is the distance in $(\mathbb{D}^n,\sigma)$ to the boundary by
      \begin{align}h_j(s)=
          \begin{cases}
          j & s\in  \left[0,\frac{1}{j}\right]
          \\\bar{k}_j(s)& s \in   \left[\frac{1}{j},\frac{3}{2j}\right]
\\1& \text{ otherwise }
          \end{cases}
      \end{align}
      where $\bar{k}_j$ is any decreasing continuous function so that $\bar{k}_j(\frac{1}{j})=j$ and $\bar{k}_j(\frac{3}{2j})=1$. If $g_j=-h_j^2dt^2+\sigma$ and $g_0=-dt^2+\sigma$ then we find $\hat{d}_{t,g_j} \not\rightarrow \hat{d}_{t,g_0}$. Furthermore, we let $L=([0,1]\times \mathbb{D}^n,\hat{d}_{t,g_0})$ and let
      \begin{align}
         F: [0,1]\times \partial \mathbb{D}^n \subset L \rightarrow  [0,1]
      \end{align} 
      be the map defined by projection onto to the time factor of $L$.  Then if $(P,d_0)=(L, \hat{d}_{t,g_0}/\sim)$ where we identify points by the map $F$ in order to conclude that    
      \begin{align}
          ([0,1]\times \mathbb{D}^n,\hat{d}_{t,g_j}) \rightarrow (P,d_0),
      \end{align}
      in the Gromov-Hausdorff sense.
  \end{ex}

  \begin{proof}
     By the conformal invariance of the null distance, if we define $f_j = \frac{1}{h_j}$ then $f_j^2g_j = -dt^2+f_j^2\sigma$ is the metric of Example \ref{Ex-No Control From Below in Space}. Hence the uniform convergence of this example follows from the uniform convergence of Example \ref{Ex-No Control From Above In Time}, which is proved in \cite{Allen-Null}. 
  \end{proof}

\subsection{Volume bound without volume convergence produces bubbles}
 In the next two examples we will see that the rate of blow up of the sequence is crucial to understanding the geometry of the limit. In particular, in the next example we see that if we have a volume bound but no volume convergence then we should not expect the main theorems of this paper to hold.

  \begin{ex}\label{Ex-BlowUpBubble}
      Let $(\mathbb{D}^n,\sigma)$, $n \ge 2$ be a flat disk, $f_j: \mathbb{D}^n \rightarrow (0,\infty)$, $j \ge 2$ a sequence of continuous functions defined radially on $\mathbb{D}^n$ by
      \begin{align}f_j(r)
          \begin{cases}
          j & r\in  \left[0,\frac{1}{j}\right]
          \\h_j(r)& r \in   \left[\frac{1}{j},\frac{3}{2j}\right]
\\1& \text{ otherwise }
          \end{cases}
      \end{align}
      where $h_j$ is any decreasing continuous function so that $h_j(\frac{1}{j})=j$, $h_j(\frac{3}{2j})=1$, and $\displaystyle \int_{\frac{1}{j}}^{\frac{3}{2j}} h_jdr\le \frac{C}{j}$. If $g_j=-dt^2+f_j^2\sigma$ and $g_0=-dt^2+\sigma$ then we find $\hat{d}_{t,g_j} \not\rightarrow \hat{d}_{t,g_0}$. Furthermore, we let  $N_1=([0,1]\times \mathbb{D}^n,\hat{d}_{t,g_0})$, $N_2=([0,1]\times \mathbb{D}^n,\hat{d}_{t,g_0})$, and 
      \begin{align}
         F:[0,1] \times \partial \mathbb{D}^2\subset N_1 \rightarrow  [0,1]\times\{0\}\subset N_2
      \end{align} 
      defined by projection of the second factor.  Then if $(N_1\sqcup N_2, \hat{d}_{t,g_0})$ is the disjoint union of $N_1$ and $N_2$ with the null distance then we can define the metric space $(P,d_0)=(N_1 \sqcup N_2, \hat{d}_{t,g_0}/\sim)$ where we identify points by the map $F$ in order to conclude that    
      \begin{align}
          ([0,1]\times \mathbb{D}^n,\hat{d}_{t,g_j}) \rightarrow (P,d_0),
      \end{align}
      in the Gromov-Hausdorff sense.
  \end{ex}
 
\subsection{Volume convergence without $L^{\frac{p}{2}}$, $p >n$ bound separates GH from SWIF}

In the next example we see that for a sequence of functions blowing up at a critical rate the limit will be Minkowski space with a taxi metric defined on $[0,1]\times [0,1]$ attached to the $t$-axis. This example illustrates the difference between Theorem \ref{thm-MainTheorem GH} and Theorem \ref{thm-MainTheorem SWIF}. Without the $L^{\frac{p}{2}}$, $p > n$ bound, but with volume convergence (which is equivalent to $L^n$ convergence of $f_j$ below), we see that the GH limit will consist of a metric space which is not a spacetime but the SWIF limit will consist of a spacetime.

   \begin{ex}\label{Ex-BlowUpSpline}
      Let $(\mathbb{D}^n,\sigma)$, $n \ge 2$ be a flat disk, $f_j:[0,1]\times \mathbb{D}^n$ a sequence of continuous functions defined radially on $\mathbb{D}^n$ by
      \begin{align}f_j(r)
          \begin{cases}
          \frac{j^{\lambda}}{1+\lambda \ln(j)} & r \in \left[0,\frac{1}{j^{\lambda}}\right]
         \\ \frac{1}{r(1-\ln(r))} & r \in \left[\frac{1}{j^{\lambda}},\frac{1}{j}\right]
          \\h_j(r) & r \in  \left[\frac{1}{j},\frac{3}{2j}\right]
\\1& \text{ otherwise }
          \end{cases}
      \end{align}
      where $\lambda >1$ and $h_j$ is any decreasing, continuous function so that $h_j(\frac{1}{j})= \frac{j}{1+\ln(j)}$, $h_j(\frac{3}{2j})=1$, and $\displaystyle \int_{\frac{1}{j}}^{\frac{3}{2j}} h_jdr\le \frac{C}{j}$. If $g_j=-dt^2+f_j^2\sigma$, $g_0=-dt^2+\sigma$ then we find $\hat{d}_{t,g_j}\not\rightarrow \hat{d}_{t,g_0}$. Furthermore, we let $N=([0,1]\times \mathbb{D}^n,\hat{d}_{t,g_0})$, $L=([0,1]\times [0,1],d_{\text{taxi}}^{\lambda})$, 
      \begin{align}
       d_{\text{taxi}}^{\lambda}((s_1,r_1),(s_2,r_2))=|s_1-s_2|+\lambda|r_1-r_2|,
      \end{align}
      and 
      \begin{align}
         F: [0,1]\times \{1\}\subset L \rightarrow  [0,1] \times \{0\}\subset N 
      \end{align} 
      so that $F(t,1)=(t,0)$. Then if $(N\sqcup L, \bar{d})$ is the disjoint union of $N$ and $L$ we can define the metric space $(P,d_0)=(N \sqcup L, \bar{d}/ \sim)$ where we identify points by the map $F$ in order to conclude that 
      \begin{align}
          ([0,1]\times \mathbb{D}^n,\hat{d}_{t,g_j}) \rightarrow (P,d_0),
      \end{align}
      in the Gromov-Hausdorff sense. Furthermore, we find
      \begin{align}
          ([0,1]\times \mathbb{D}^n,\hat{d}_{t,g_j}) \rightarrow ([0,1]\times \mathbb{D}^n,\hat{d}_{t,g_0}),
      \end{align}
      in the Sormani-Wenger Intrinsic Flat sense.
  \end{ex}  
  \begin{proof}
     The proof of the Gromov-Hausdorff convergence was given in Example 3.6 of \cite{Allen-Null}. Here we will verify the Sormani-Wenger Intrinsic Flat convergence by applying Theorem \ref{thm-MainTheorem SWIF}. The volume convergence, area bound, and diameter bound of $f_j^2\sigma$ was established in Example 3.7 of \cite{Allen-Sormani-2} and hence we may apply Theorem \ref{thm-MainTheorem SWIF} to conclude SWIF convergence.
  \end{proof}

  \section{Estimating the SWIF Distance Between Spacetimes}\label{sect-Estimating SWIF}

  Now we explain how to construct a metric space $(Z,d_Z)$ in which we can isometrically embed two spacetimes equipped with the null distance. 
We will use this metric space to calculate the flat distance between the isometric images of two spacetimes $L_1$ and $L_2$. 
This will give us an upper bound on the intrinsic flat distance $d_{\mathcal F} (L_1, L_2)$. One should notice that the results of this section work for any globally hyperbolic spacetime and any time function which is differentiable  with respect to $t$. Hence we are providing the theoretical tools necessary to estimate the intrinsic flat distance required to address Conjecture \ref{conj-Conjecture SWIF}.

\begin{defn}\label{defn-Z}
Let $M$ be a compact connected manifold, $L=M \times [t_0,t_1]$, and $W \subset L$ a closed set. Let
\begin{align}
Z : =  \left(   L \times [0,H] \right) \sqcup  (L\times \{H+1\})|_\sim
\end{align}
where we identify $(x,H) \sim (x,H+1)$ for all $x \in W$. Let $g_1=\sigma_1-h_1^2dt^2$, $g_2=\sigma_2-h_2^2dt^2$ be two Lorentzian metrics where $\sigma_1,\sigma_2$ are Riemannian metrics for each time $t \in [t_0,t_1]$.

Now we define a special class of curves on $Z$, denoted by $\mathcal{C}$, so that  the curve $\gamma:[0,T]\rightarrow Z$ is in $\mathcal{C}$ if for $z_1=(\ell_1,h_1),z_2=(\ell_2,h_2) \in Z$ we find $\gamma(s)=(\alpha(s),h(s))$, $s \in [0,T]$ where $h(s)\in[0,H]\cup\{H+1\}$, $h(0)=h_1$, $h(T)=h_2$, and $\alpha(s)\in L$ is any piecewise smooth curve joining $\ell_1$ to $\ell_2$ so that $\alpha(s)$ is a piecewise causal curve with respect to $g_1$ if $h(s)=0$ and $g_2$ if $h(s) \in (0,H]\cup \{H+1\}$. 

If we let $\tau:L \rightarrow \R$ be a differentiable time function then we can define the length function for any $\gamma \in \mathcal{C}$
\begin{align}
    L_Z(\gamma)=\int_0^T |d\tau(\alpha'(s))|+|h'(s)|ds,
    \end{align}
and the corresponding distance function $d_Z: Z \times Z \to [0, \infty)$  by
\begin{align}
d_Z(z_1, z_2) = \inf \{L_Z(\gamma): \gamma \in \mathcal{C}, \, \gamma(0)=z_1,\, \gamma(1)=z_2\}.
\end{align}

Lastly, define functions $\varphi_1: L \to Z$ and $\varphi_2: L \to Z$ by 
\begin{align}
\varphi_1(x) = & (x, 0) \\
\varphi_2(x) =   &
\begin{cases}
(x,H+1)   & x \notin W \\
(x, H)  &  \textrm{otherwise.} 
\end{cases}
\end{align}
\end{defn}

Now we give some estimates on the metric space $(Z,d_Z)$ which will allow us to show that $\varphi_1$ and $\varphi_2$
isometrically embed $(L,\hat{d}_{t,g_1})$ and $(L,\hat{d}_{t,g_2})$ into $Z$, respectively.

\begin{lem}\label{lem-Z-Estimate} 
For $(Z, d_Z), \tau$ as in Definition \ref{defn-Z}, $(Z,d_Z)$ is a complete metric space and for all  $(\ell,h),(\ell',h') \in L\times[0,H] \subset Z$,
\begin{align}\label{dZEstimatetog_0}
d_Z((\ell,h),(\ell',h'))\ge \hat{d}_{\tau,g_1}(\ell,\ell') +|h-h'|
\end{align}
and
\begin{align} \label{region-dist-dec-to-Z}
d_Z((\ell,h),(\ell',h')) \le \hat{d}_{\tau,g_2}(\ell,\ell')+|h-h'|.
\end{align}
Furthermore, if  $\diam(L,\hat{d}_{\tau,g_2}) \le D$, 
\begin{align}\label{Z-Metric-Inequality-Assumption 1}
    \frac{\sigma_2(v,v)}{h_2^2} &\ge  \frac{\sigma_{1}(v,v)}{h_{1}^2}, \quad \forall (p,t) \in L, v \in T_pM, 
\end{align}
and  for all $x,y \in W$, $W \subset L$ closed, we assume that
\begin{align}\label{eq-distCond0}
\hat{d}_{\tau,g_2}(x,y) \le \hat{d}_{\tau,g_1}(x,y) +2 \delta
\end{align}
for some $\delta>0$ and we choose $H \ge \delta$, 
then $\varphi_1: (L,\hat{d}_{\tau,g_1}) \to (Z,d_Z)$ and $\varphi_2: (L,\hat{d}_{\tau,g_2}) \to (Z,d_Z)$ of Definition \ref{defn-Z} are distance preserving. 
\end{lem}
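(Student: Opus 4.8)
The plan is to verify the four assertions in order: completeness of $(Z,d_Z)$, the lower bound \eqref{dZEstimatetog_0}, the upper bound \eqref{region-dist-dec-to-Z}, and finally the distance-preserving property of $\varphi_1,\varphi_2$ under the extra hypotheses. The key structural observation, which I would establish first, is that the length functional $L_Z$ decouples into the ``$L$-part'' $\int|d\tau(\alpha'(s))|\,ds$ and the ``height-part'' $\int|h'(s)|\,ds$, and that the $L$-part of a curve $\gamma=(\alpha,h)\in\mathcal C$ is itself the null length of $\alpha$ with respect to whichever metric governs the current height slice --- $\hat L_{\tau,g_1}$ when $h(s)=0$ and $\hat L_{\tau,g_2}$ when $h(s)\in(0,H]\cup\{H+1\}$. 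Since a curve in $\mathcal C$ can only switch between the $g_1$-regime and the $g_2$-regime at heights where $h(s)=0$, its height-part is at least the total variation of $h$, hence at least $|h_1-h_2|$ plus twice the ``excursion to $0$'' whenever a switch occurs.

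For the \textbf{lower bound} \eqref{dZEstimatetog_0}, I would argue that for $(\ell,h),(\ell',h')\in L\times[0,H]$, any $\gamma=(\alpha,h)\in\mathcal C$ joining them has $L_Z(\gamma)\ge \int|d\tau(\alpha'(s))|\,ds + |h-h'|$. The height term is immediate from $\int|h'|\ge|h(0)-h(T)|$. For the $L$-term, I would use \eqref{Z-Metric-Inequality-Assumption 1}: the metric $\sigma_2/h_2^2\ge\sigma_1/h_1^2$ means every $g_2$-causal direction is also $g_1$-causal (causality of a direction $v$ at $(p,t)$ is exactly $h^2\ge\sigma(v,v)$ after rescaling $dt$), so $\alpha$ is piecewise causal for $g_1$ regardless of the height it is traversed at; hence $\int|d\tau(\alpha'(s))|\,ds \ge \hat d_{\tau,g_1}(\ell,\ell')$ by definition of the null distance. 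Taking the infimum over $\gamma$ gives the claim. For the \textbf{upper bound} \eqref{region-dist-dec-to-Z}, I would produce an explicit competitor: take a near-optimal piecewise $g_2$-causal curve $\beta$ from $\ell$ to $\ell'$, run it at a fixed height, and then move vertically from $h$ to $h'$ staying inside $(0,H]$ (possible since $h,h'\in[0,H]$, nudging off $0$ if needed, which costs nothing in the limit); this curve lies in $\mathcal C$ and has $L_Z$ arbitrarily close to $\hat d_{\tau,g_2}(\ell,\ell')+|h-h'|$.

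For the \textbf{distance-preserving} conclusion, $\varphi_1$ is handled by \eqref{dZEstimatetog_0} (giving $d_Z(\varphi_1(\ell),\varphi_1(\ell'))\ge\hat d_{\tau,g_1}(\ell,\ell')$) together with the trivial competitor that stays at height $0$ and uses a $g_1$-causal curve (giving the reverse inequality, using that a $g_1$-causal curve traversed at height $0$ is admissible). For $\varphi_2$ the nontrivial direction is the lower bound $d_Z(\varphi_2(\ell),\varphi_2(\ell'))\ge \hat d_{\tau,g_2}(\ell,\ell')$; here the difficulty, and the \textbf{main obstacle}, is a curve that exploits the identification $(x,H)\sim(x,H+1)$ for $x\in W$ to ``teleport'' and do part of its $L$-travel at height $0$ under the cheaper $g_1$ metric. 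I would control this by a case analysis on whether $\gamma$ ever reaches height $0$: if not, all $L$-travel is $g_2$-causal and we win directly; if it does, then $\gamma$ must descend from near height $H$ (or $H+1$) to $0$ and back, and since any passage through the identified locus forces $\alpha$ to pass through points of $W$, we can split $\alpha$ at the entry/exit points $x,y\in W$ of the $g_1$-segment and invoke \eqref{eq-distCond0}: the $g_1$-length of that segment is at least $\hat d_{\tau,g_1}(x,y)\ge \hat d_{\tau,g_2}(x,y)-2\delta$, while the forced vertical excursion of depth $\ge H\ge\delta$ contributes an extra $\ge 2H\ge 2\delta$ to the height-part, and the triangle inequality for $\hat d_{\tau,g_2}$ reassembles the pieces into $\hat d_{\tau,g_2}(\ell,\ell')$ with the $-2\delta$ absorbed. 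Completeness of $(Z,d_Z)$ I would get at the end from the fact that $Z$ is a finite gluing of compact pieces $L\times[0,H]$ and $L\times\{H+1\}$ and $d_Z$ is comparable on each piece to the (complete) product of the null distance and the Euclidean metric, so Cauchy sequences converge.
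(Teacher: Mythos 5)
Your plan matches the paper's in all essential respects: the lower bound \eqref{dZEstimatetog_0} comes from the fact (a consequence of \eqref{Z-Metric-Inequality-Assumption 1}) that every admissible curve in $\mathcal C$ has $g_1$-causal $L$-component, so its $L$-cost is at least $\hat d_{\tau,g_1}(\ell,\ell')$; the upper bound \eqref{region-dist-dec-to-Z} comes from an explicit $g_2$-causal competitor; and for the $\varphi_2$ isometry you run the same dichotomy on whether a competing curve dips to height zero, using the forced $2H\ge 2\delta$ vertical cost to absorb the slack from \eqref{eq-distCond0}. This is exactly the paper's argument. One step is stated imprecisely, though: you invoke \eqref{eq-distCond0} at the ``entry/exit points $x,y\in W$ of the $g_1$-segment.'' The endpoints of the actual height-zero piece lie at arbitrary positions in $L$, not necessarily in $W$ --- the gluing only gates passage between $L\times[0,H]$ and $L\times\{H+1\}$, and puts no constraint on vertical motion inside $L\times[0,H]$. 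The points that are guaranteed to lie in $W$ are the bracket points where the curve passes through $W\times\{H\}$ (or the endpoints $\ell,\ell'$ themselves, when $\ell,\ell'\in W$). The paper applies \eqref{eq-distCond0} at precisely these bracket points $p,q\in W$, after first lower-bounding the $L$-cost of the entire excursion between them by $\hat d_{\tau,g_1}(p,q)$ via $\mathcal C\subset\mathcal C_1$ and the triangle inequality for $\hat d_{\tau,g_1}$. Reading your $x,y$ as those bracket points makes your estimate coincide with the paper's; as literally written it would appeal to \eqref{eq-distCond0} at points outside $W$.
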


We note that $H$ is chosen to prevent having shorter paths  between pairs of points either in $(L,\hat{d}_{\tau,g_1})$ or $(L,\hat{d}_{\tau,g_2})$ seen as subsets of $Z$ than the ones in $(L,\hat{d}_{\tau,g_1})$ or $(L,\hat{d}_{\tau,g_2})$, themselves.

\begin{proof}
Let $C(s)=(\gamma(s),h(s))$, $s \in [0,1]$, be a curve connecting $(\ell,h),(\ell',h') \in L \times [0,H]\subset Z$ where it is enough to assume that $C(s) \subset L \times [0,H]$. If we let $\mathcal{C}_1$ be the set of all curves $C$ so that $\gamma$ is piecewise causal with respect to $g_1$,  $\mathcal{C}_2$ be the set of all curves $C$ so that $\gamma$ is piecewise causal with respect to $g_2$, and $\mathcal{C}$ be the set of all curves $C$ so that $\gamma$ is piecewise causal with respect to $g_1$ if $h(s)=0$ and $\gamma$ is piecewise causal with respect to $g_2$ if $h(s) \in (0,H]$ then by \eqref{Z-Metric-Inequality-Assumption 1} we see that
\begin{align}\label{eq-Curve Class Comparison}
    \mathcal{C}_2 \subset \mathcal{C} \subset \mathcal{C}_1.
\end{align}
Hence by taking infimums over these three classes of curves with respect to the length $L_Z$ defined in Definition \ref{defn-Z} we find \eqref{dZEstimatetog_0} and \eqref{region-dist-dec-to-Z}.

By \eqref{dZEstimatetog_0} we see that $\varphi_1(L)$ is isometrically embedded in $(Z,d_Z)$ since by \eqref{Z-Metric-Inequality-Assumption 1} it will always be shorter to consider a curve connecting points in $\varphi_1(L)$ which lies completely inside $\varphi_1(L)$.

 Now we would like to show that $\varphi_2(L)$ is isometrically embedded in $(Z,d_Z)$. By choosing $H\ge \delta$ we will show that for points in $W \times \{H\}$ it is never more efficient to take advantage of shortcuts in $L \times \{0\}\subset Z$.  To see this consider $C$ connecting $(p,H)$ to $(p',0)$ to $(q',0)$ and then to $(q,H)$. We see that by \eqref{dZEstimatetog_0}, \eqref{region-dist-dec-to-Z}, and \eqref{eq-Curve Class Comparison} that
 \begin{align}
     L_Z(C)&\ge \hat{d}_{\tau,g_2}(p,p')+H+\hat{d}_{\tau,g_1}(p',q')+\hat{d}_{\tau,g_2}(q',q)+H
     \\&\ge \hat{d}_{\tau,g_1}(p,p')+\hat{d}_{\tau,g_1}(p',q')+\hat{d}_{\tau,g_1}(q',q)+2H
     \\&\ge \hat{d}_{\tau,g_1}(p,q)+2H 
     \\&\ge\hat{d}_{\tau,g_1}(p,q)+2\delta \ge \hat{d}_{\tau,g_2}(p,q),\label{eq-Last Distance Estimate Z}
 \end{align}
 where we used \eqref{eq-distCond0} in \eqref{eq-Last Distance Estimate Z}. Hence for any curve $C$ connecting  points $(p,H),(q,H) \in W \times \{H\}$ we find
 \begin{align}
 L_{Z}(C) \ge \hat{d}_{\tau,g_2}(p,q).\label{LengthDistanceInequality1}
 \end{align}

Then by the fact that points in $(L\setminus W,H+1)$ are not glued to $L \times \{H\}$, and hence must enter $W\times \{H\}$ before attempting to take advantage of shortcuts in $L \times\{0\}\subset Z$, we are able to conclude that for points $(p,H+1),(q,H+1) \in (L \setminus W,H+1)$ or $(p,H+1) \in (L \setminus W,H+1)$, $(q,H) \in (W,H)$ or $(p,H) \in (W,H+1)$, $(q,H+1) \in (L \setminus W,H+1)$ and curves $C$ connecting them
\begin{align}
 L_{Z}(C) \ge \hat{d}_{\tau,g_2}(p,q).\label{LengthDistanceInequality2}
 \end{align}

Now for $p,q \in L$, by taking a curve $C \subset \varphi_2(L)$ connecting $\varphi_2(p),\varphi_2(q)$ whose length is within $\varepsilon>0$ of the distance $\hat{d}_{\tau,g_2}(p,q)$ and then taking $\varepsilon\rightarrow 0$ we can combine with \eqref{LengthDistanceInequality1} and \eqref{LengthDistanceInequality2} to find
\begin{align}
d_Z(   \varphi_2(p),\varphi_2(q) ) = \hat{d}_{\tau,g_2}(p,q).
\end{align}
Hence we can conclude that $\varphi_2(L)$ is isometrically embedded in $(Z,d_Z)$, as desired.
\end{proof}
  

Now we can calculate the flat distance between $\varphi_{1_\sharp}[[L]]$ and $\varphi_{2_\sharp}[[L]]$. 

\begin{thm}\label{thm-Lorentzian SWIF Estimate}
Let $M$ be a compact, connected, and oriented manifold, $L=M \times [t_0,t_1]$, and $\tau:L\rightarrow \R$ a time function which is differentiable with respect to time. Let $g_1=\sigma_1-h_1^2dt^2$ and $g_2=\sigma_2-h_2^2dt^2$ be two Lorentzian metrics so that $\diam(L,\hat{d}_{\tau,g_2}) \le D$, 
\begin{align}\label{Z-Metric-Inequality-Assumption}
    \frac{\sigma_2(v,v)}{h_2^2} &\ge  \frac{\sigma_1(v,v)}{h_{1}^2}, \quad \forall (p,t) \in L, v \in T_pM.
\end{align}
If $\mathcal{H}_{\hat{d}_{\tau,g_2}}^{n+1}(L)\le V$, $\mathcal{H}_{\hat{d}_{\tau,g_2}}^{n}(\partial L) \leq A $, $W \subset L$ is a closed set,
\begin{align}\label{eq-volCond}
\mathcal{H}_{\hat{d}_{\tau,g_2}}^{n+1}( M \setminus W) \le V'
\end{align}
and assume that there exists a $\delta > 0$ so that for all $ x,y \in W$,
 \begin{align}\label{eq-distCond}
\hat{d}_{\tau,g_2}(x,y) \le \hat{d}_{\tau,g_1}( x,  y) +2 \delta
\end{align}
and that $H \ge \delta$.
Then
\begin{align}\label{Fest}
d^Z_{F}( \varphi_1(L), \varphi_2(L)) \le 2C_{n+1}V' + C_{n+2}H V + C_{n+1} H A 
\end{align}
where $\varphi_1,\varphi_2$ are the maps, and $Z$ is the metric space described in Definition \ref{defn-Z}.
\end{thm}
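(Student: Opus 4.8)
\emph{Proof proposal.} The strategy follows the Riemannian construction of \cite{Allen-Perales-Sormani}: exhibit an explicit integral filling of $\varphi_{2\sharp}[[L]]-\varphi_{1\sharp}[[L]]$ inside $(Z,d_Z)$, built from the natural ``slab'' over $L$, and bound the masses of the filling and of the leftover boundary term by Hausdorff measures via the multiplicity-one estimate $\mass(T)\le C_m\,\mathcal{H}^m_{d_Z}$. First, Lemma \ref{lem-Z-Estimate} applies verbatim: hypotheses \eqref{Z-Metric-Inequality-Assumption}, \eqref{eq-distCond} and $H\ge\delta$ give that $(Z,d_Z)$ is a complete metric space and that $\varphi_1,\varphi_2$ of Definition \ref{defn-Z} are distance preserving, so $\varphi_{1\sharp}[[L]]$ and $\varphi_{2\sharp}[[L]]$ are the isometric images in $Z$ of the integral current spaces $(L,\hat d_{\tau,g_1},[[L]])$ and $(L,\hat d_{\tau,g_2},[[L]])$ of Example \ref{ex-Globally Hyperbolic Integral Current Space}. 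The upper estimate \eqref{region-dist-dec-to-Z}, together with $d_Z\le\hat d_{\tau,g_2}$ on each horizontal slice, records that on the slab $L\times[0,H]$ and on the top copy $L\times\{H+1\}$ the metric $d_Z$ is dominated by the $\ell^1$-product of $\hat d_{\tau,g_2}$ on the $L$-factor with the Euclidean metric on the height factor.

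Next I would introduce the filling current. Since $L\times[0,H]$ is a compact oriented manifold with corners and the natural Lipschitz atlas of $L\times[0,H]$ obtained from that of $L$ in Example \ref{ex-Globally Hyperbolic Integral Current Space} maps into $(Z,d_Z)$ with Lipschitz charts (by the domination just noted), it carries a canonical multiplicity-one integral current $\mathcal{P}:=[[L\times[0,H]]]\in\intcurr_{n+2}(Z)$ with $\partial\mathcal{P}$ of finite mass, where $L$ is given the $g_2$-current structure. Up to an orientation sign (harmless, since $\mass(\pm\mathcal{P})=\mass(\mathcal{P})$ and similarly for the term below), its boundary is $\partial\mathcal{P}=[[L\times\{H\}]]-[[L\times\{0\}]]-[[\partial L\times[0,H]]]$ with $[[L\times\{0\}]]=\varphi_{1\sharp}[[L]]$. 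The one delicate point is to relate $[[L\times\{H\}]]$ to $\varphi_{2\sharp}[[L]]$: restricting $[[L]]$ to $W$ and to $L\setminus W$ and pushing forward, one gets $\varphi_{2\sharp}[[L]]=[[W\times\{H\}]]+[[(L\setminus W)\times\{H+1\}]]$ since $\varphi_2$ sends $W$ to level $H$ and $L\setminus W$ to level $H+1$; and because $(x,H)\sim(x,H+1)$ exactly for $x\in W$, the currents $[[L\times\{H\}]]$ and $\varphi_{2\sharp}[[L]]$ coincide over $W$ and differ over $L\setminus W$ only by $[[(L\setminus W)\times\{H\}]]-[[(L\setminus W)\times\{H+1\}]]$. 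Rearranging,
\[
\varphi_{2\sharp}[[L]]-\varphi_{1\sharp}[[L]]=U+\partial\mathcal{P},\qquad U:=[[\partial L\times[0,H]]]-[[(L\setminus W)\times\{H\}]]+[[(L\setminus W)\times\{H+1\}]],
\]
so $d^Z_F(\varphi_{1\sharp}[[L]],\varphi_{2\sharp}[[L]])\le\mass(U)+\mass(\mathcal{P})$.

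Finally I would estimate the two masses by $\mass(T)\le C_m\,\mathcal{H}^m_{d_Z}$ together with the metric domination, exactly as for the slab estimates in \cite{Allen-Perales-Sormani}. For the slab, $\mathcal{H}^{n+2}_{d_Z}(L\times[0,H])\le H\,\mathcal{H}^{n+1}_{\hat d_{\tau,g_2}}(L)\le HV$, so $\mass(\mathcal{P})\le C_{n+2}HV$. For the side of $U$, $\mathcal{H}^{n+1}_{d_Z}(\partial L\times[0,H])\le H\,\mathcal{H}^{n}_{\hat d_{\tau,g_2}}(\partial L)\le HA$, contributing $\le C_{n+1}HA$; and each horizontal slice $(L\setminus W)\times\{H\}$, $(L\setminus W)\times\{H+1\}$ has $\mathcal{H}^{n+1}_{d_Z}\le\mathcal{H}^{n+1}_{\hat d_{\tau,g_2}}(L\setminus W)\le V'$ by \eqref{eq-volCond}, contributing $\le C_{n+1}V'$ each. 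Adding gives exactly $d^Z_F\le 2C_{n+1}V'+C_{n+2}HV+C_{n+1}HA$, which is \eqref{Fest}.

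I expect the main obstacle to be the boundary bookkeeping across the gluing in the middle step: making fully rigorous, in the quotient space $Z$, that $[[L\times\{H\}]]$ and $\varphi_{2\sharp}[[L]]$ genuinely coincide over $W$ (orientations included) and that the discrepancy over $L\setminus W$ is precisely the asserted pair of oppositely oriented slices, and likewise that $\mathcal{P}$ is a legitimate integral current in $(Z,d_Z)$ whose boundary is as stated. A secondary technical step is the Hausdorff-measure product inequalities $\mathcal{H}^{m+1}_{d_Z}(S\times[0,H])\le H\,\mathcal{H}^{m}_{\hat d_{\tau,g_2}}(S)$, which follow from \eqref{region-dist-dec-to-Z} and the curve-class comparison \eqref{eq-Curve Class Comparison} once one notes that $d_Z$ restricted to these regions is bounded above by the $\ell^1$-product metric, but which should be written out; everything else is routine.
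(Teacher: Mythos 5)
Your proposal is correct and is essentially the paper's own argument: the slab filling $\mathcal{P}=[[L\times[0,H]]]$ and remainder $U$ coincide (up to the Stokes orientation sign, which you rightly note is harmless for mass) with the paper's $T$ and $T'$, the algebraic verification that $\varphi_{2\sharp}[[L]]-\varphi_{1\sharp}[[L]]=U+\partial\mathcal{P}$ is the same, and the mass bounds are obtained identically by comparing $d_Z$ to the $\ell^1$-product metric built from $\hat d_{\tau,g_2}$ and the height coordinate and invoking $\mass \le C_m\,\mathcal{H}^m$. The two technical points you flag at the end (the boundary bookkeeping across the gluing and the Hausdorff-measure product inequality) are exactly the two steps the paper carries out in its proof.
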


\begin{proof}
Apply Lemma  \ref{defn-Z} with $H \ge \delta$  to get a metric space $(Z,d_Z)$ and distance preserving maps $\varphi_1: L \to Z$ and $\varphi_2 : L  \to Z$.   
Our goal will be to define integral currents $T \in  \intcur_{m+1}(Z) $ and $T' \in  \intcur_m(Z)$ such that 
\begin{align}
\varphi_{2\#}[[L]]- \varphi_{1\#}[[ L]]   & =  \partial T + T', \\
\mass(T)    & \leq  C_{n+2}H V, \\ 
 \mass(T')   & \leq  C_{n+1}(2V'  +H A).
\end{align}
Then by the definition of flat convergence 
\begin{align}
d^Z_{F}( \varphi_2(L), \varphi_1( L)) \le  & \mass(T) + \mass(T')
\end{align}
and by the mass estimates we will find (\ref{Fest}).
   
To this end, since $\varphi_1$ and $\varphi_2$ are distance preserving maps,  
\begin{eqnarray}
\varphi_{1\#}[[ L_1]]  &  =  &  [[L \times \{0\}]], \\
\varphi_{2\#}[[ L_2]]  &  =  &  [[W  \times \{H \}  ]]  +  [[L \setminus W\times\{H+1\}]], 
\end{eqnarray}
where we are using the notation for a current on a globally hyperbolic spacetime as introduced in Example \eqref{ex-Globally Hyperbolic Integral Current Space}.
Then define
\begin{align}
T=  & [[  \,L \times [0,H] \,]]   \in  \intcur_{m+1}(Z),\label{eq-DefofT} \\ 
T'   =  & [[L \setminus W \times \{H+1\}]]  -    [[   (L \setminus W  ) \times \{H \} ]]  ,\label{eq-DefofT'1}
\\&\quad- [ [\, \partial L\times [0,H] \,]]    \in  \intcur_{m}(Z), \label{eq-DefofT'2}
\end{align}
where we are using standard notation for the product of integral current spaces.
If we compute the boundary
\begin{align}\label{eq-DefofBoundaryofT}
\partial T = &  [[ L  \times \{H \}  ]]  -   [[L\times \{0\}]]  +  [ [\, \partial L \times [0,H] \,]].
\end{align}
then we can conclude by \eqref{eq-DefofT}, \eqref{eq-DefofT'1}, \eqref{eq-DefofT'2}, and \eqref{eq-DefofBoundaryofT} that 
\begin{align}
\varphi_{2\#}[[L]]- \varphi_{1\#}[[ L]] = \partial  T + T'. 
\end{align}
Now by the definition of $T$ and $T'$, if we define for $(x,h),(x',h') \in Z$ the metric
\begin{align}
d_{Z'}((x,h),(x',h'))= \hat{d}_{\tau,g_2}(x,x') +|h-h'|
\end{align}
then we can calculate
\begin{align}
\mass(T) \leq  & C_{n+2} \mathcal{H}_{d_Z}^{n+2}(L\times [0,H])\\
\leq  & C_{n+2} \mathcal{H}_{d_{Z'}}^{n+2}(L\times [0,H])\\
\leq  & C_{n+2} H\mathcal{H}_{\hat{d}_{\tau,g_2}}^{n+1}(L),
\end{align}
\begin{align}
\mass(T')  &\leq  C_{n+1} \mathcal{H}_{d_Z}^{n+1}(L \setminus W \times \{H+1\})
\\&\qquad+C_{n+1} \mathcal{H}_{d_Z}^{n+1}((L \setminus W  ) \times \{H \})
\\&\qquad +C_{n+1} \mathcal{H}_{d_Z}^{n+1}(\partial L\times [0,H])
\\&\leq  C_{n+1} \mathcal{H}_{d_{Z'}}^{n+1}(L \setminus W \times \{H+1\})
\\&\qquad+C_{n+1} \mathcal{H}_{d_{Z'}}^{n+1}((L \setminus W  ) \times \{H \})
\\&\qquad +C_{n+1} \mathcal{H}_{d_{Z'}}^{n+1}(\partial L\times [0,H])
\\&\le 2C_{n+1}\mathcal{H}_{\hat{d}_{\tau,g_2}}^{n+1}(L \setminus W )+C_{n+1} H\mathcal{H}_{\hat{d}_{\tau,g_2}}^{n}(\partial L).
\end{align}
The desired estimate then follows by using the assumed Hausdorff measure bounds.
\end{proof}

\section{Proof of Main Theorems}\label{sect-Proofs}

In this section we provide the proofs of the main theorems. We start with a theorem which establishes uniform convergence from below. One should note that the proof given here is similar to the proof given for the corresponding result for warped products, Theorem 4.1 of \cite{Allen-Null}.

\begin{thm}\label{thm-LowerDistanceBound}
Let $M^n$ be a compact, connected manifold, $\sigma_j$ a sequence of continuous Riemannian manifolds on $M$, $L=[t_0,t_1]\times \Sigma$, $h_j:M\rightarrow (0,\infty)$, and $g_j=-h_j^2dt^2+\sigma_j$, $j \in \N\cup\{\infty\}$. Then if  
\begin{align}
    \frac{\sigma_j(x)(v,v)}{h_j^2} &\ge \left(1-\frac{1}{j}\right) \frac{\sigma_{\infty}(x)(v,v)}{h_{\infty}^2},\quad \forall x \in M, v \in T_xM. \label{eq-Lower Bound Assumption}
\end{align}
then
\begin{align}
    \hat{d}_{t,g_j} \ge  \hat{d}_{t,g_{\infty}}-C(j),
\end{align}
where $C(j) \ge 0$ and $C(j) \rightarrow 0$ as $j \rightarrow \infty$.
\end{thm}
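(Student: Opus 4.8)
The plan is to show that the conformally rescaled spatial metrics $\tilde\sigma_j := \sigma_j/h_j^2$ satisfy $\tilde\sigma_j \ge (1-\tfrac{1}{j})\tilde\sigma_\infty$ and then transfer a lower bound on null distances into a lower bound on Riemannian distances via the structure of static spacetimes, where everything is controlled by the spatial factor. First I would invoke the conformal invariance of the null distance: since $h_j^{-2}g_j = -dt^2 + \tilde\sigma_j$, we have $\hat d_{t,g_j} = \hat d_{t,-dt^2+\tilde\sigma_j}$, so it suffices to prove the estimate for the \emph{normalized} static spacetimes $\bar g_j := -dt^2 + \tilde\sigma_j$. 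This reduces the problem to one about warped products with warping function $1$, i.e. metric products in the Lorentzian sense, which is exactly the situation handled for warped products in \cite{Allen-Null}.

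Next I would establish the key pointwise scaling inequality. From \eqref{eq-Lower Bound Assumption}, $\tilde\sigma_j(v,v) \ge (1-\tfrac1j)\tilde\sigma_\infty(v,v)$ for all $v$, so in particular $\tilde\sigma_j \ge (1-\tfrac1j)\tilde\sigma_\infty$ as bilinear forms, which gives $d_{\tilde\sigma_j} \ge \sqrt{1-\tfrac1j}\, d_{\tilde\sigma_\infty}$ for the induced length metrics on $M$. Then I would relate the null distance $\hat d_{t,\bar g_j}(x,y)$ for $x=(s,p)$, $y=(s',q)$ to the quantity $\max\{|s-s'|, d_{\tilde\sigma_j}(p,q)\}$, or more precisely bound it below by it: any piecewise causal curve for $\bar g_j$ must, on each causal piece, have its $t$-increment dominate its $\tilde\sigma_j$-arclength increment (causality with respect to $-dt^2+\tilde\sigma_j$ means $|dt| \ge |d\tilde\sigma_j$-speed$|$), so the null length $\hat L$ is at least the total $t$-variation, which in turn is at least $|s-s'|$, and also at least the $\tilde\sigma_j$-length of the projected spatial curve, hence at least $d_{\tilde\sigma_j}(p,q)$. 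This is the content used in the warped-product case and should carry over verbatim since $h\equiv 1$ after rescaling.

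Finally I would assemble the estimate. Using the lower bound $\hat d_{t,\bar g_j}(x,y) \ge \max\{|s-s'|, d_{\tilde\sigma_j}(p,q)\} \ge \max\{|s-s'|, \sqrt{1-\tfrac1j}\,d_{\tilde\sigma_\infty}(p,q)\}$ and comparing with an explicit upper bound for $\hat d_{t,\bar g_\infty}(x,y)$ of the form $\max\{|s-s'|, d_{\tilde\sigma_\infty}(p,q)\}$ plus lower-order terms (or a formula $\hat d_{t,\bar g_\infty} = |s-s'| + (\text{something}) \le$ a comparable max, depending on which normalization of piecewise-causal admissible curves is used), one gets $\hat d_{t,g_j} = \hat d_{t,\bar g_j} \ge \sqrt{1-\tfrac1j}\,\hat d_{t,\bar g_\infty} \ge \hat d_{t,g_\infty} - (1-\sqrt{1-\tfrac1j})\,\diam$. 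Setting $C(j) := (1-\sqrt{1-\tfrac1j})\,\diam(L,\hat d_{t,g_\infty})$ gives $C(j)\ge 0$ and $C(j)\to 0$, provided the diameter is finite, which holds by compactness of $M$ and $[t_0,t_1]$ together with the bi-Lipschitz bounds of \cite{AB}. The main obstacle I anticipate is the precise comparison between the null distance of a product spacetime and the $\max$ of the time and space distances: the null distance is not literally this max (it depends on the admissible curve class and whether one allows broken null geodesics connecting arbitrary points), so the careful step is verifying, as in \cite{Allen-Null}, that the lower bound by $d_{\tilde\sigma_j}(p,q)$ genuinely holds for \emph{all} piecewise causal curves and then controlling the discrepancy by a term going to zero uniformly in $x,y$.
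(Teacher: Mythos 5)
Your argument is correct and reaches the conclusion by a route that differs from the paper's in its final step, though the two share the same opening moves. Both proofs begin with the conformal-invariance reduction to $\bar g_j := -dt^2 + \sigma_j/h_j^2$, and both exploit the causal-curve consequence of the pointwise bound $\sigma_j/h_j^2 \ge \bigl(1-\tfrac1j\bigr)\sigma_\infty/h_\infty^2$. The paper then introduces the intermediate metric $\tilde g_{j,\infty}=-dt^2+\bigl(1-\tfrac1j\bigr)\sigma_\infty/h_\infty^2$, observes that every piecewise causal curve for $\bar g_j$ is piecewise causal for $\tilde g_{j,\infty}$ so that $\hat d_{t,\bar g_j}\ge\hat d_{t,\tilde g_{j,\infty}}$, and finishes by invoking Theorem~1.4 of \cite{AB} to obtain $\hat d_{t,\tilde g_{j,\infty}}\to\hat d_{t,\bar g_\infty}$ uniformly. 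You instead work directly with the explicit product formula $\hat d_{t,-dt^2+\sigma}((s,p),(s',q)) = \max\{|s-s'|,\,d_\sigma(p,q)\}$ (the paper's Lemma~\ref{lem-Generalized Products Distance}, i.e.\ Lemma~4.4 of \cite{AB}) and the scaling $d_{\tilde\sigma_j}\ge\sqrt{1-\tfrac1j}\,d_{\tilde\sigma_\infty}$, deducing $\hat d_{t,\bar g_j}\ge\sqrt{1-\tfrac1j}\,\hat d_{t,\bar g_\infty}$ and hence the explicit constant $C(j)=\bigl(1-\sqrt{1-\tfrac1j}\bigr)\,\diam(L,\hat d_{t,g_\infty})$. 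Your approach is more self-contained (it does not appeal to the black-box uniform-convergence theorem) and produces an explicit rate, whereas the paper yields $C(j)\to 0$ only implicitly. One small point: the concern you raise at the end is unfounded. For a static product with unit lapse the null distance \emph{is} exactly $\max\{|s-s'|,\,d_\sigma(p,q)\}$ --- this is precisely the content of Lemma~\ref{lem-Generalized Products Distance} --- so the hedge about ``plus lower-order terms'' is unnecessary; in any case you only need this expression as an upper bound for $\hat d_{t,\bar g_\infty}$ and as a lower bound for $\hat d_{t,\bar g_j}$, and both hold cleanly.
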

\begin{proof}
First we notice that if we define $\bar{g}_j=-dt^2+\frac{\sigma_j}{h_j^2}$ then by the conformal invariance of the null distance we see that
\begin{align}\label{eq-Conformal Invariance Consequence}
    \hat{d}_{t,g_j}=\hat{d}_{t,\bar{g}_j}.
\end{align}

Define the metric $\tilde{g}_{j,\infty}=-dt^2+\left(1-\frac{1}{j}\right) \frac{\sigma_{\infty}}{h_{\infty}}$ and notice that by \eqref{eq-Lower Bound Assumption} every piecewise causal curve with respect to $\bar{g}_j$ is a piecewise causal curve with respect to $\tilde{g}_{j,\infty}$. Hence, since the null distance is defined as the infimum of the length of all piecewise causal curves and the length only depends on the time function we find that
\begin{align}
    \hat{d}_{t,\bar{g}_j}(p,q) \ge \hat{d}_{t,\tilde{g}_{j,\infty}}(p,q), \quad \forall p,q \in L.
\end{align}

Then we notice that $\tilde{g}_{j,\infty} \rightarrow \bar{g}_{\infty}$ uniformly so by Theorem 1.4 of \cite{AB}, we see that
\begin{align}
    \hat{d}_{t,\tilde{g}_{j,\infty}} \rightarrow \hat{d}_{t,\bar{g}_{\infty}},
\end{align}
uniformly, which implies the desired result by the conformal invariance observation in \eqref{eq-Conformal Invariance Consequence}
\end{proof}

The following lemma was proved in \cite{AB} except for the fact that there the time coordinate was in all of $\R$. Here we just notice that restricting $t \in [t_0,t_1]$ does not change the conclusion and we slightly change the expression for the metric to highlight the distance function with respect to $\sigma$.

\begin{lem}[Lemma 4.4 of \cite{AB}]\label{lem-Generalized Products Distance}
    Let $M^n$ be a compact, connected manifold, $\sigma$ a continuous Riemannian manifold on $M$, $L=[t_0,t_1]\times M$, and $g=-dt^2+\sigma$. Then if $x,y \in M$, $t,s \in [t_0,t_1]$ we find 
    \begin{align}
        \hat{d}_{t,g}((t,x),(s,y))=d_{\sigma}(x,y)+\max\{0,|t-s|-d_{\sigma}(x,y)\}.
    \end{align}
\end{lem}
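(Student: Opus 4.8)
The plan is to prove two inequalities. For the upper bound, given $x,y\in M$ and $t,s\in[t_0,t_1]$, I would explicitly construct a competitor piecewise causal curve from $(t,x)$ to $(s,y)$ whose null length is $d_\sigma(x,y)+\max\{0,|t-s|-d_\sigma(x,y)\}$. The natural candidate: first travel ``diagonally'' along a $g$-causal curve that projects to a $\sigma$-minimizing geodesic from $x$ to $y$ while moving in $t$ as fast as causality permits (for $g=-dt^2+\sigma$, a curve is causal iff $|t'(u)|\ge |\gamma'(u)|_\sigma$, so we may increase $t$ at unit $\sigma$-speed), covering a $t$-interval of length $d_\sigma(x,y)$ and contributing $d_\sigma(x,y)$ to the null length; then, if $|t-s|>d_\sigma(x,y)$, append a purely vertical causal segment at the fixed spatial point $y$ to cover the remaining $t$-displacement $|t-s|-d_\sigma(x,y)$, contributing exactly that amount. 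If instead $|t-s|\le d_\sigma(x,y)$, one can split the $\sigma$-geodesic into a future-directed and a past-directed causal piece so that the net $t$-change is exactly $s-t$ while the null length stays $d_\sigma(x,y)$. Either way $\hat d_{t,g}((t,x),(s,y))\le d_\sigma(x,y)+\max\{0,|t-s|-d_\sigma(x,y)\}$.

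For the lower bound, I would take an arbitrary piecewise causal curve $\beta$ from $(t,x)$ to $(s,y)$ with pieces on which $\beta=(\tau(u),\gamma(u))$ and causality gives $|\tau'(u)|\ge |\gamma'(u)|_\sigma$. Its null length is $\sum_i|\tau(s_i)-\tau(s_{i-1})| = \int |\tau'(u)|\,du \ge \int |\gamma'(u)|_\sigma\,du = L_\sigma(\gamma) \ge d_\sigma(x,y)$. Separately, $\int|\tau'(u)|\,du \ge |\int \tau'(u)\,du| = |s-t|$. These two bounds alone only give $\hat d_{t,g}\ge \max\{d_\sigma(x,y),|t-s|\}$, which is not yet the claimed formula when $|t-s|>d_\sigma(x,y)$ is false but we still want the ``excess'' term — wait, when $|t-s|\le d_\sigma(x,y)$ the claimed value is just $d_\sigma(x,y)=\max\{\dots\}$, so that case is done. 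The remaining case $|t-s|>d_\sigma(x,y)$: here I need $\int|\tau'|\,du \ge d_\sigma(x,y) + (|t-s|-d_\sigma(x,y)) = |t-s|$, and indeed $\int|\tau'|\,du\ge|s-t|=|t-s|$ already gives exactly this. So in fact the lower bound $\hat d_{t,g}\ge \max\{d_\sigma(x,y),|t-s|\}$ matches the formula in both regimes, since $d_\sigma(x,y)+\max\{0,|t-s|-d_\sigma(x,y)\}=\max\{d_\sigma(x,y),|t-s|\}$.

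The main subtlety — and the step I expect to require the most care — is the upper-bound construction when $|t-s|\le d_\sigma(x,y)$: one must realize a zero-ish net $t$-displacement (net displacement $s-t$, with $|s-t|\le d_\sigma(x,y)$) along a $\sigma$-geodesic of length $d_\sigma(x,y)$ using only piecewise causal pieces, without inflating the total variation of $\tau$ beyond $d_\sigma(x,y)$. The fix is to choose, along the unit-speed geodesic parametrized on $[0,d_\sigma(x,y)]$, a piecewise-linear $\tau$ with slopes $\pm 1$ arranged so that $\tau$ is monotone on each piece (hence each piece is causal, future- or past-directed) and $\tau(d_\sigma(x,y))-\tau(0)=s-t$ while $\int_0^{d_\sigma(x,y)}|\tau'| = d_\sigma(x,y)$; this is possible precisely because $|s-t|\le d_\sigma(x,y)$. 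Since $\sigma$ is merely continuous, I would invoke that $(M,\sigma)$ is a length space so $\sigma$-minimizing curves (or near-minimizers, taking a limit $\varepsilon\to 0$) exist, matching the setup already used for the null distance in the paper. This reduces everything to Lemma 4.4 of \cite{AB} with the cosmetic change noted in the statement, so I would then simply cite that argument with $t$ restricted to $[t_0,t_1]$, observing that the restriction does not affect any step because all competitor curves constructed above stay within $[t_0,t_1]\times M$ whenever the endpoints do.
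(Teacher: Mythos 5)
Your proposal is correct and takes essentially the same approach as the paper: explicit piecewise-causal competitors for the upper bound (diagonal/timelike when $|t-s|>d_\sigma(x,y)$, zig-zag when $|t-s|\le d_\sigma(x,y)$), the causality constraint $|\tau'|\ge|\gamma'|_\sigma$ for the lower bound, and a citation to Lemma~4.4 of \cite{AB} for the remainder. The one point you leave slightly implicit is that the zig-zag may require many pieces of small amplitude to keep the time coordinate within $[t_0,t_1]$ -- precisely the modification the paper emphasizes with ``possibly breaking up curves which do not fit into $L$'' -- but this is easily arranged without changing the null length.
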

\begin{proof}
Fix $p=(t,x),q=(s,y) \in L$ and let $\gamma$ be a piecewise smooth curve connecting $x$ to $y$, parameterized with respect to $\sigma$ arc length. Without loss of generality we may assume that $t \le s$. Consider the null curve with respect to $g$ 
    \begin{align}
        \alpha(\tau)= \left(t+\int_0^{\tau} |\gamma'(\eta)|_{\sigma}d\eta, \gamma(\tau)\right),
    \end{align}
    on $\R\times M$ where we note that $\alpha$ may not fit inside of $L$. Let $\tau' \in [0,\infty)$ be such that $\alpha(\tau')=(t',y)$ for some $t'\in[t_0,\infty)$ and notice that $\tau'\ge d_{\sigma}(x,y)$. 
    
    If $t'\le s$ then $q \in I^+(p)$ and we can build a new curve
    \begin{align}
        \tilde{\alpha}(\tau)= \left(t+\frac{|s-t|}{|t'-t|}\int_0^{\tau} |\gamma'(\eta)|_{\sigma}d\eta, \gamma(\tau)\right),
    \end{align}
    so that 
    \begin{align}
      \hat{d}_{t,g}(p,q)&\le \hat{L}_{t,g}(\tilde{\alpha}) 
      = |t-s| = d_{\sigma}(x,y)+\max\{0,  |t-s|-d_{\sigma}(x,y)\}.
    \end{align}
    If $t' > s$ and $\alpha$ fits inside of $L$ then we we can modify $\alpha$ by breaking into a piecewise null curve with two pieces. If $\alpha$ does not fit inside $L$ then we can modify $\alpha$ to be a piecewise null curve $\tilde{\alpha}$, with $k$ pieces. In either case this can be done so that on each piece defined on $[\tau_i,\tau_{i+1}]$ where $0\le\tau_i<\tau_{i+1}\le\tau'$ and $1\le i \le k$ where $\tilde{\alpha}$ is a null curve it is equal to
     \begin{align}
        \tilde{\alpha}(\tau)= \left(\tau_i\pm\int_0^{\tau} |\gamma'(\eta)|_{\sigma}d\eta, \gamma(\tau)\right).
    \end{align}
   To complete the proof we can follow the rest of the proof of Lemma 4.4 of \cite{AB}, possibly breaking up curves which do not fit into $L$ as was done above.
\end{proof}

We now prove a simple observation comparing the distance between points in $M$ to points in $L$ which will be useful for proving the main theorems of this paper.

\begin{lem}\label{lem-Reiammanian distance to Lorentzian Distance}
     Let $M^n$ be a compact, connected manifold, $\sigma_1,\sigma_2$ continuous Riemannian manifolds on $M$, $L=[t_0,t_1]\times M$, and $g_i=-dt^2+\sigma_i$, $i \in \{1,2\}$. Then if $x,y \in M$, $t,s \in [t_0,t_1]$ so that
    \begin{align}
        |d_{\sigma_1}(x,y)-d_{\sigma_2}(x,y)| <K, \quad \forall x, y \in W \subset M,
    \end{align}
then
\begin{align}
        |\hat{d}_{t,\sigma_1}((x,t),(y,s))-\hat{d}_{t,\sigma_2}((x,t),(y,s))| <2K, 
    \end{align}
    $\forall (x,t), (y,s) \in W \times [t_0,t_1]$.
\end{lem}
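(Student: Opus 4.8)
The plan is to apply Lemma \ref{lem-Generalized Products Distance} to rewrite both null distances as explicit expressions in the Riemannian distances $d_{\sigma_1}(x,y)$ and $d_{\sigma_2}(x,y)$, and then bound the difference of these expressions using only the hypothesis $|d_{\sigma_1}(x,y) - d_{\sigma_2}(x,y)| < K$. By Lemma \ref{lem-Generalized Products Distance}, for $i \in \{1,2\}$ we have
\begin{align}
\hat{d}_{t,g_i}((x,t),(y,s)) = d_{\sigma_i}(x,y) + \max\{0,\, |t-s| - d_{\sigma_i}(x,y)\}.
\end{align}
So the whole problem reduces to the following elementary real-variable claim: if $a,b \ge 0$ with $|a-b| < K$ and $c \ge 0$ is arbitrary (here $c = |t-s|$), then $|\,(a + \max\{0,c-a\}) - (b + \max\{0,c-b\})\,| < 2K$.

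First I would prove this real-variable claim by a short case analysis on the sign of $c-a$ and $c-b$. If $c \le a$ and $c \le b$, both max terms vanish and the difference is exactly $|a-b| < K < 2K$. If $c \ge a$ and $c \ge b$, then $a + (c-a) = c = b + (c-b)$, so the difference is $0$. The remaining (mixed) case, say $c \ge a$ but $c \le b$ — which forces $a \le b$ — gives $a + (c-a) = c$ on one side and $b + 0 = b$ on the other, so the difference is $|c - b| = b - c \le b - a = |a-b| < K$. Symmetrically for the other mixed case. In every case the bound is at most $K$, which is even stronger than the claimed $2K$; this slack is harmless and presumably present because the lemma is stated in a form convenient for later application (the factor $2K$ there matching the $2\lambda + 2\delta_{\lambda,\kappa,j}$ type bounds used elsewhere).

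Then I would conclude: for any $(x,t),(y,s) \in W \times [t_0,t_1]$, since $x,y \in W$ the hypothesis gives $|d_{\sigma_1}(x,y) - d_{\sigma_2}(x,y)| < K$, so applying the real-variable claim with $a = d_{\sigma_1}(x,y)$, $b = d_{\sigma_2}(x,y)$, $c = |t-s|$ yields $|\hat{d}_{t,g_1}((x,t),(y,s)) - \hat{d}_{t,g_2}((x,t),(y,s))| < 2K$, as desired. I do not expect any genuine obstacle here — the content is entirely in Lemma \ref{lem-Generalized Products Distance}, and the remaining step is a one-line triangle-inequality-style estimate for the function $r \mapsto r + \max\{0, c - r\} = \max\{r, c\}$, which is $1$-Lipschitz in $r$; indeed, recognizing that $d_{\sigma_i}(x,y) + \max\{0, |t-s| - d_{\sigma_i}(x,y)\} = \max\{d_{\sigma_i}(x,y),\, |t-s|\}$ makes the whole argument immediate, since $|\max\{a,c\} - \max\{b,c\}| \le |a-b|$. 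The only thing to be careful about is that $W$ need not be all of $M$, so the conclusion is correctly stated only for pairs of points whose spatial components lie in $W$, which is exactly what is asserted.
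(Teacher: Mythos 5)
Your proof is correct and follows the same route as the paper: reduce via Lemma~\ref{lem-Generalized Products Distance} to an elementary real-variable estimate, then handle it by cases. The paper splits the difference via the triangle inequality into $|d_{\sigma_1}-d_{\sigma_2}|$ plus a difference of max terms and bounds each by $K$, whereas your observation that $r + \max\{0,c-r\} = \max\{r,c\}$ and that $r \mapsto \max\{r,c\}$ is $1$-Lipschitz collapses the case analysis and actually yields the sharper bound $K$ rather than $2K$ — a genuine (if minor) improvement, though harmless either way since the lemma as stated suffices for the applications.
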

\begin{proof}
    We can calculate by Lemma \ref{lem-Generalized Products Distance}
    \begin{align}
        |\hat{d}_{t,\sigma_1}&((x,t),(y,s))-\hat{d}_{t,\sigma_2}((x,t),(y,s))| 
        \\&\le |d_{\sigma_1}(x,y)-d_{\sigma_2}(x,y)|
        \\& \quad + |\max\{0,  |t-s|-d_{\sigma_1}(x,y)\}-\max\{0,  |t-s|-d_{\sigma_2}(x,y)\}|,
    \end{align}
    and hence we need to bound the second term. 
    
    If both $d_{\sigma_1}(x,y),d_{\sigma_2}(x,y)\ge |t-s|$ then the second term is $0$ and we are done. If both $d_{\sigma_1}(x,y),d_{\sigma_2}(x,y)< |t-s|$ then 
    \begin{align}
    |\max&\{0,  |t-s|-d_{\sigma_1}(x,y)\}-\max\{0,  |t-s|-d_{\sigma_2}(x,y)\}| 
    \\&= |d_{\sigma_1}(x,y)-d_{\sigma_2}(x,y)| \le K,
    \end{align}
    and we are done. If $d_{\sigma_1}(x,y)< |t-s|\le d_{\sigma_2}(x,y) $ or $d_{\sigma_2}(x,y)< |t-s|\le d_{\sigma_1}(x,y) $ then for $i\in\{1,2\}$ equal to the index of the distance which is smaller we find
      \begin{align}
    |\max&\{0,  |t-s|-d_{\sigma_1}(x,y)\}-\max\{0,  |t-s|-d_{\sigma_2}(x,y)\}| 
    \\&= ||t-s|-d_{\sigma_i}(x,y)| 
    \\&\le |d_{\sigma_1}(x,y)-d_{\sigma_2}(x,y)|\le K.
    \end{align}
\end{proof}

We now make an observation that a H\"{o}lder bound between two Riemannian manifolds extends to a H\"{o}lder bound between the corresponding static spacetimes.

\begin{lem}\label{lem-Reiammanian Holder to Lorentzian Holder}
     Let $0<C\le C'$, $M^n$ be a compact, connected manifold, $\sigma_1,\sigma_2$ continuous Riemannian manifolds on $M$, $L=[t_0,t_1]\times M$, and $g_i=-dt^2+\sigma_i$, $i \in \{1,2\}$. Then if $x,y \in M$, $t,s \in [t_0,t_1]$, $\alpha \in (0,1]$ so that
    \begin{align}
        d_{\sigma_1}(x,y) <C d_{\sigma_2}(x,y)^{\alpha}, \quad \forall x, y \in M,
    \end{align}
then
\begin{align}
        \hat{d}_{t,\sigma_1}((t,x),(s,y)) <C' \hat{d}_{t,\sigma_2}((t,x),(s,y))^{\alpha}, 
    \end{align}
    $\forall (t,x), (s,y) \in  [t_0,t_1]\times M $.
\end{lem}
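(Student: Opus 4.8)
The plan is to reduce the claim to a one–variable inequality via the closed–form expression for the null distance in Lemma~\ref{lem-Generalized Products Distance}. For $g_i=-dt^2+\sigma_i$ that lemma gives
\begin{align}
\hat{d}_{t,g_i}((t,x),(s,y)) = d_{\sigma_i}(x,y) + \max\{0,\,|t-s|-d_{\sigma_i}(x,y)\} = \max\{\,d_{\sigma_i}(x,y),\,|t-s|\,\},
\end{align}
for $i\in\{1,2\}$. So the assertion to be proved is exactly
\begin{align}
\max\{\,d_{\sigma_1}(x,y),\,|t-s|\,\} \le C'\,\max\{\,d_{\sigma_2}(x,y),\,|t-s|\,\}^{\alpha}
\end{align}
for all $x,y\in M$ and $t,s\in[t_0,t_1]$.

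Next I would invoke the elementary identity $\max\{a,b\}^{\alpha}=\max\{a^{\alpha},b^{\alpha}\}$ for nonnegative reals, together with the observation that $u\le Kw$ and $v\le Kz$ imply $\max\{u,v\}\le K\max\{w,z\}$. Applying this with $K=C'$, it suffices to establish the two scalar bounds
\begin{align}
d_{\sigma_1}(x,y) \le C'\,d_{\sigma_2}(x,y)^{\alpha}, \qquad |t-s| \le C'\,|t-s|^{\alpha}.
\end{align}
The first is immediate from the hypothesis $d_{\sigma_1}(x,y)<C\,d_{\sigma_2}(x,y)^{\alpha}$ and $C\le C'$.

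For the second bound I would use that $\alpha\in(0,1]$ and that the time parameter ranges over a bounded interval: writing $|t-s| = |t-s|^{1-\alpha}\,|t-s|^{\alpha}\le (t_1-t_0)^{1-\alpha}\,|t-s|^{\alpha}$, the inequality $|t-s|\le C'\,|t-s|^{\alpha}$ holds as soon as $C'\ge (t_1-t_0)^{1-\alpha}$. When $\alpha=1$ this is automatic, and in the intended applications $C'$ already arises as a large geometric constant (coming from Theorem~\ref{Thm-Allen-Holder}, as in Theorem~\ref{thm-MainTheorem Holder}), so one simply enlarges it to also dominate $(t_1-t_0)^{1-\alpha}$; alternatively one records this explicit choice of $C'$ in the statement. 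Combining the two bounds via the $\max$–monotonicity above gives the claimed Hölder inequality for all pairs $(t,x)\neq(s,y)$, and it holds trivially when $(t,x)=(s,y)$ since both sides vanish.

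The only genuinely delicate point is this interaction between the Hölder exponent $\alpha<1$ and the temporal separation: because the null distance incorporates $|t-s|$ \emph{additively} rather than multiplicatively, a Hölder bound between the Riemannian distances passes to the static spacetimes only after one produces a Hölder-type estimate for $|t-s|$ against itself, which is available precisely because $t-s$ lies in a bounded interval. Every other step is the direct substitution of Lemma~\ref{lem-Generalized Products Distance}.
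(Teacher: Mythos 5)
Your proof is correct and follows essentially the same route as the paper's: both reduce to the explicit formula of Lemma~\ref{lem-Generalized Products Distance} and then verify the H\"older bound using the two scalar estimates $d_{\sigma_1}\le C\,d_{\sigma_2}^{\alpha}$ and $|t-s|\le (t_1-t_0)^{1-\alpha}|t-s|^{\alpha}$. Your observation that $\hat{d}_{t,g_i}=\max\{d_{\sigma_i}(x,y),|t-s|\}$ together with $\max\{a,b\}^{\alpha}=\max\{a^{\alpha},b^{\alpha}\}$ packages into a single clean step what the paper carries out as a four-way case analysis on the relative sizes of $d_{\sigma_1}$, $d_{\sigma_2}$, and $|t-s|$, and you also make explicit the role of $C'\ge(t_1-t_0)^{1-\alpha}$, which the paper leaves implicit.
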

\begin{proof}   
    If both $d_{\sigma_1}(x,y),d_{\sigma_2}(x,y)\ge |t-s|$ then the second term in the distance estimate of Lemma \ref{lem-Generalized Products Distance} is $0$ and we are done. 
    If both $d_{\sigma_1}(x,y),d_{\sigma_2}(x,y)< |t-s|$ then by Lemma \ref{lem-Generalized Products Distance} we find
    \begin{align}
   \hat{d}_{t,\sigma_1}((t,x),(s,y))&=|t-s|
   \\\hat{d}_{t,\sigma_2}((t,x),(s,y))^{\alpha}&=|t-s|^{\alpha}
    \end{align}
    and since $|t-s|\le |t_0-t_1|$ we can pick a constant $C' \ge C$ so that the desired claim is true in this case. 
    If $d_{\sigma_1}(x,y)< |t-s|\le d_{\sigma_2}(x,y) $ then by Lemma \ref{lem-Generalized Products Distance} we find
      \begin{align}
   \hat{d}_{t,\sigma_1}((t,x),(s,y))&= |t-s|
   \\& \le C' |t-s|^{\alpha}
   \\&=C' d_{\sigma_2}(x,y){\alpha} = C'\hat{d}_{t,\sigma_2}((t,x),(s,y))^{\alpha}.
    \end{align}
    Lastly, if $d_{\sigma_2}(x,y)< |t-s|\le d_{\sigma_1}(x,y) $ then by Lemma \ref{lem-Generalized Products Distance} we find
    \begin{align}
   \hat{d}_{t,\sigma_1}((t,x),(s,y))&=d_{\sigma_1}(x,y)
   \\& \le Cd_{\sigma_2}(x,y)^{\alpha}
   \\& \le C|t-s|^{\alpha} = C\hat{d}_{t,\sigma_2}((t,x),(s,y))^{\alpha}.
    \end{align}
\end{proof}

In order to complete the proof of Theorem \ref{thm-MainTheorem GH} we will need to show pointwise convergence of the sequence of null distances as well as a H\"{o}lder distance bound from above. When combined with Lemmas \ref{lem-Generalized Products Distance}, \ref{lem-Reiammanian distance to Lorentzian Distance}, and \ref{lem-Reiammanian Holder to Lorentzian Holder} this will then imply uniform convergence of the null distances.

\begin{proof}[Proof of Theorem \ref{thm-MainTheorem GH}]
First notice that if $g_j=-h_{\alpha}^2 dt^2+\sigma_j$ and $\tilde{g}_j=-dt^2+\tilde{\sigma}_j$ where $\tilde{\sigma}_j=\frac{\sigma_j}{h_j^2}$ then by the conformal invariance of the null distance we have that
\begin{align}
    \hat{d}_{t,g_j}=\hat{d}_{t,\tilde{g}_j},
\end{align}
and hence we are justified in restricting to the case of $\tilde{g}_j$ for the rest of the argument.

Now we will show that the assumptions of Theorem \ref{thm-MainTheorem GH} imply compactness of the sequence of null distance functions of $\tilde{g}_j$ in the uniform topology. The first assumption of Theorem \ref{thm-MainTheorem GH} implies
\begin{align}
    \int_M|\tilde{\sigma}_j|_{\sigma_{\infty}}^{\frac{p}{2}}dV_{\sigma_{\infty}} \le C,
\end{align}
and hence by Theorem \ref{Thm-Allen-Holder} we find
\begin{align}
    d_{\tilde{\sigma}_j}(q_1,q_2) \le C(M,\sigma_{\infty}) d_{\sigma_{\infty}}(q_1,q_2)^{\frac{p-n}{p}},
\end{align}
for all $q_1,q_2 \in M$. Notice that this implies that there exists a $D \in (0,\infty)$ so that
\begin{align}
    \Diam(M,\tilde{\sigma}_j) \le D.
\end{align}
Now by Lemma \ref{lem-Reiammanian Holder to Lorentzian Holder} this implies there exists a $C' \ge C(M,\sigma_{\infty})$ so that
\begin{align}\label{eq-Upper Distance Bound}
    \hat{d}_{t,\tilde{g}_j}((s_1,q_1),(s_2,q_2)) \le C' \hat{d}_{t,g_{\infty}}((s_1,q_1),(s_2,q_2))^{\frac{p-n}{p}},
\end{align}
for all $(s_1,q_1),(s_2,q_2) \in L$. Then by combining \eqref{eq-Upper Distance Bound} with Theorem \ref{thm-LowerDistanceBound} we see by the Arzela-Ascoli Theorem that a subsequence must converge uniformly to a metric $d_{\infty}$ on $L$. We will denote the subsequence in the same way as the sequence. Our goal is to show that $d_{\infty}=\hat{d}_{t,\tilde{g}_{\infty}}$.

Next we will show that the assumptions of Theorem \ref{thm-MainTheorem GH} imply pointwise almost everywhere convergence of $(M,\tilde{\sigma}_j)$. By assumption we know that
\begin{align}
    \tilde{\sigma}_j &\ge \left(1-\frac{1}{j}\right)\tilde{\sigma}_{\infty},
    \\
    \Vol(M, \tilde{\sigma}_j) &\rightarrow \Vol(M,\tilde{\sigma}_{\infty}),
    \\\Area(\partial M, \tilde{\sigma}_j) &\le A,
    \\ \Diam(M, \tilde{\sigma}_j)& \le D,
\end{align}
and hence by Theorem \ref{thm-Allen-Bryden-VADB-II} we see that
\begin{align}
    d_{\tilde{\sigma}_j}(p,q) \rightarrow d_{\tilde{\sigma}_{\infty}}(p,q),
\end{align}
for almost every $p,q \in M$ with respect to $\sigma_{\infty}$.

Consider $p,q \in M$ so that
\begin{align}\label{Eq-Pointwise Distance Convergence}
    d_{\tilde{\sigma}_j}(p,q)\rightarrow d_{\tilde{\sigma}_{\infty}}(p,q).
\end{align}
Then by Lemma \ref{lem-Generalized Products Distance} we see that
\begin{align}
    \hat{d}_{t,\tilde{g}_j}((s_1,p),(s_2,q))&= d_{\tilde{\sigma}_j}(p,q)+\max\{0,|s_1-s_2|-d_{\tilde{\sigma}_j}(p,q)\},
    \\\hat{d}_{t,\tilde{g}_{\infty}}((s_1,p),(s_2,q))&= d_{\tilde{\sigma}_{\infty}}(p,q)+\max\{0,|s_1-s_2|-d_{\tilde{\sigma}_{\infty}}(p,q)\},
\end{align}
and hence \eqref{Eq-Pointwise Distance Convergence} implies that
\begin{align}
    \hat{d}_{t,\tilde{g}_j}((s_1,p),(s_2,q))\rightarrow \hat{d}_{t,\tilde{g}_{\infty}}((s_1,p),(s_2,q)).
\end{align}
Since the assumptions of Theorem \ref{thm-MainTheorem GH} imply pointwise convergence of $d_{\tilde{\sigma}_j}$ to $d_{\tilde{\sigma}_{\infty}}$ for almost every $p,q \in M$ we see by \eqref{Eq-Pointwise Distance Convergence} that $\hat{d}_{t,\tilde{g}_j}$ converges to $\hat{d}_{t,\tilde{g}_{\infty}}$ for almost all $(s_1,p),(s_2,q) \in L$. 

When we combine the pointwise almost everywhere convergence of  $\hat{d}_{t,\tilde{g}_j}$  to $\hat{d}_{t,\tilde{g}_{\infty}}$ with the uniform convergence of $\hat{d}_{t,\tilde{g}_j}$ to $d_{\infty}$ we see that the subsequence $\hat{d}_{t,\tilde{g}_j}$ uniformly converges to $\hat{d}_{t,\tilde{g}_{\infty}}$. Since this is true of every subsequence obtained by compactness we see that the original sequence must convergence uniformly, as desired.
\end{proof}

We now give the proof of Theorem \ref{thm-MainTheorem SWIF} which will apply the Sormani-Wenger Intrinsic Flat estimates of Theorem \ref{thm-Lorentzian SWIF Estimate}, the previous estimates for Riemannian manifolds of Theorem \ref{thm-Uniform Distance Bounds on W}, and Lemmas \ref{lem-Generalized Products Distance}, \ref{lem-Reiammanian distance to Lorentzian Distance}, and \ref{lem-Reiammanian Holder to Lorentzian Holder}

\begin{proof}[Proof of Theorem \ref{thm-MainTheorem SWIF}]

First notice that if $g_j=-h_j^2 dt^2+\sigma_j$ and $\tilde{g}_j=-dt^2+\tilde{\sigma}_j$ where $\tilde{\sigma}_j=\frac{\sigma_j}{h_j^2}$ then by the conformal invariance of the null distance we have that
\begin{align}
    \hat{d}_{t,g_j}=\hat{d}_{t,\tilde{g}_j},
\end{align}
and hence we are justified in restricting to the case of $\tilde{g}_j$ for the rest of the argument.
By assumption we know that if we rescale $\hat{\sigma}_j = \left(1-\frac{1}{j}\right)^{-1} \tilde{\sigma}_j$ then we find
\begin{align}
    \hat{\sigma}_j &\ge\tilde{\sigma}_{\infty},
    \\
    \Vol(M, \hat{\sigma}_j) &\rightarrow \Vol(M,\tilde{\sigma}_{\infty}),
    \\ \Diam(M, \hat{\sigma}_j)& \le D,
\end{align}
and hence by Theorem \ref{thm-Uniform Distance Bounds on W} that for any $\lambda  \in (0, \diam(M, \tilde{\sigma}_{\infty}))$ and  $\kappa >1$, there exists 
 a measurable set $W_{\lambda,\kappa} \subset M$ and a constant $\delta_{\lambda, \kappa,j}>0$ such that for all $p_1, p_2 \in W_{\lambda, \kappa}$
\begin{align}\label{eq-Riemannian Distance Estimate}
|d_{\hat{\sigma}_j}(p_1,p_2)-d_{\tilde{\sigma}_0}(p_1,p_2)| < 2 \lambda + 2\delta_{\lambda,\kappa,j},
\end{align}
where $\delta_{\lambda,\kappa,j} \to 0$ as $j \to \infty$,
and
\begin{align}\label{eq-Riemannian Volume Estimate}
\Vol(M \setminus W_{\lambda, \kappa},\hat{\sigma}_j) \le  \frac{1}{\kappa}\Vol(M,\tilde{\sigma}_0)+|\Vol(M,\hat{\sigma}_j)-\Vol(M,\tilde{\sigma}_0)|.
\end{align}

By Lemma \ref{lem-Reiammanian distance to Lorentzian Distance} we see that \eqref{eq-Riemannian Distance Estimate} implies

\begin{align}\label{eq-Riemannian Distance Estimate 2}
|\hat{d}_{t,\hat{g}_j}((s_1,p_1),(s_2,p_2))-\hat{d}_{\tilde{g}_0}((s_1,p_1),(s_2,p_2))| < 4 \lambda + 4\delta_{\lambda,\kappa,j},
\end{align}
for all $(s_1, p_1),(s_2, p_2) \in [t_0,t_1]\times W_{\lambda, \kappa} $ where $\hat{g}_j = -dt^2+\hat{\sigma}_j$.

Now notice by Lemma \ref{lem-Generalized Products Distance} we see that  
\begin{align}\label{eq-Euclidean Overestimate}
\hat{d}_{t,\hat{g}_j}((s_1, p_1),(s_2, p_2)) &\le \sqrt{d_{\hat{\sigma}_j}(p_1,p_2)^2+|s_1-s_2|^2}
\\&:= d_{\mathbb{E}_{\hat{\sigma}_j}} ((s_1, p_1),(s_2, p_2)).  
\end{align} 
So by combining \eqref{eq-Euclidean Overestimate} with \eqref{eq-Riemannian Volume Estimate} we find
\begin{align}
    \mathcal{H}^{n+1}_{\hat{d}_{t,\hat{g}_j}}([t_0,t_1]\times M\setminus W_{\lambda, \kappa}) &\le \mathcal{H}^{n+1}_{d_{\mathbb{E}_{\hat{\sigma}_j}}}([t_0,t_1]\times M\setminus W_{\lambda, \kappa})
    \\& \le \mathcal{H}^{n}_{d_{\hat{\sigma}_j}}(M\setminus W_{\lambda, \kappa})\mathcal{H}^{1}([t_0,t_1]) 
    \\&\le \Vol(M\setminus W_{\lambda, \kappa},\hat{\sigma}_j)|t_1-t_0| 
    \\&\le \left(\frac{\Vol(M,\tilde{\sigma}_j)}{\kappa}+C_j\right)|t_1-t_0|,
\end{align}
where $C_j \searrow 0$ as $j \rightarrow \infty$.
Similarly we can compute the bounds
\begin{align}
    \mathcal{H}^{n+1}(L)&\le \mathcal{H}^{n+1}_{d_{\mathbb{E}_{\hat{\sigma}_j}}}([t_0,t_1]\times M ) 
    \\&\le \mathcal{H}^{n}_{d_{\hat{\sigma}_j}}(M)\mathcal{H}^{1}([t_0,t_1]) \le C\Vol(M,\tilde{\sigma}_{\infty}) |t_1-t_0|:=\bar{V},
    \\\mathcal{H}^{n}(\partial L)&=\mathcal{H}^{n}_{d_{\mathbb{E}_{\hat{\sigma}_j}}}( [t_0,t_1]\times \partial M )+ \mathcal{H}^{n}_{d_{\mathbb{E}_{\hat{\sigma}_j}}}( \{t_0,t_1\}\times M )
    \\&\le \mathcal{H}^{n-1}_{d_{\hat{\sigma}_j}}(\partial M)\mathcal{H}^{1}([t_0,t_1]) + 2\mathcal{H}^{n}_{d_{\hat{\sigma}_j}}(M)
    \\&\le A |t_1-t_0|+2\Vol(M,\tilde{\sigma}_{\infty})C:=\bar{A}.
\end{align}
This implies for $H = 4 \lambda + 4\delta_{\lambda,\kappa,j}$ that by applying Theorem \ref{thm-Lorentzian SWIF Estimate} with the previous estimates we find
\begin{align}\label{Flat Distance Estimate}
d^Z_{F}( \varphi_0(L), \varphi_j(L)) &\le 2C_{n+1}\left(\frac{\Vol(M,\tilde{\sigma}_0)}{\kappa}+C_j\right)|t_1-t_0| 
\\&\quad +\left(4 \lambda + 4\delta_{\lambda,\kappa,j}\right) \left(C_{n+2} \bar{V} + C_{n+1} \bar{A} \right)
\end{align}
where $\varphi_j,\varphi_0$ are the maps, and $Z$ is the metric space described in Definition \ref{defn-Z}. First by taking the limsup on both sides we find
\begin{align}\label{Flat Distance Estimate}
\limsup_{j\rightarrow \infty}d^Z_{F}( \varphi_0(L), \varphi_j(L)) &\le 2C_{n+1}\frac{\Vol(M,\tilde{\sigma}_0)}{\kappa}|t_1-t_0| 
\\&\quad +4 \lambda \left(C_{n+2}\bar{V} + C_{n+1}  \bar{A} \right).
\end{align}
Now by taking $\lambda\rightarrow 0$ and $\kappa \rightarrow \infty$ we see that
\begin{align}\label{Flat Distance Estimate}
0\le \limsup_{j\rightarrow \infty}d^Z_{F}( \varphi_0(L), \varphi_j(L)) &\le 0,
\end{align}
which implies that 
\begin{align}
    (L,\hat{d}_{t,\hat{g}_j}) \rightarrow (L,\hat{d}_{t,\tilde{g}_0}),
\end{align}
in the Sormani-Wenger Intrinsic Flat sense.

Now we notice by Lemma \ref{lem-Reiammanian distance to Lorentzian Distance} that 
\begin{align}
   | \hat{d}_{t,\hat{g}_j}((s_1, p_1),(s_2, p_2))-\hat{d}_{t,\tilde{g}_j}((s_1, p_1),(s_2, p_2))|&\le \frac{4}{j},
\end{align}
and by definition $\hat{\sigma}_j \ge \tilde{\sigma}_j$ and hence by Theorem \ref{thm-Lorentzian SWIF Estimate} we see that
\begin{align}
    d_{\mathcal{F}}((L,\hat{d}_{t,\hat{g}_j}),(L,\hat{d}_{t,\tilde{g}_j}))&\le C_j
\end{align}
where $C_j \searrow 0$ as $j \rightarrow \infty$. Now by the triangle inequality for the Sormani-Wenger intrinsic flat distance we find
\begin{align}
   d_{\mathcal{F}}((L,\hat{d}_{t,\hat{g}_j}),(L,\hat{d}_{t,\tilde{g}_0}))&\le  d_{\mathcal{F}}((L,\hat{d}_{t,\tilde{g}_j}),(L,\hat{d}_{t,\hat{g}_j}))
   \\&\quad +d_{\mathcal{F}}((L,\hat{d}_{t,\hat{g}_j}),(L,\hat{d}_{t,\tilde{g}_0})),
\end{align}
which implies that 
\begin{align}
    (L,\hat{d}_{t,\tilde{g}_j}) \rightarrow (L,\hat{d}_{t,\tilde{g}_0}),
\end{align}
in the Sormani-Wenger Intrinsic Flat sense, as desired.

\end{proof}

\noindent
{\bf Conflict of Interest:}

The authors have no competing interests to declare that are relevant to the content of this article.

\noindent
{\bf Data Availability Statement:}

This manuscript has no associated data.

\bibliography{bibliography}

\end{document}